\newcommand{\NN}{\mathbb{N}}
\newcommand{\ZZ}{\mathbb{Z}}
\newcommand{\T}{\mathcal{T}}
\theoremstyle{plain}
\newtheorem{thm}{Theorem}
\newtheorem{prop}[thm]{Proposition}
\theoremstyle{definition}
\newtheorem{definition}[thm]{Definition}
\theoremstyle{remark}
\numberwithin{equation}{section}
\numberwithin{thm}{section}
\begin{document}

\title{Hypergraph Fuss-Catalan Numbers}
\author{Parth Chavan}\author{Andrew Lee} \author{Karthik Seetharaman}
 \address{Euler Circle, Palo Alto, CA}
 \email{spc2005@outlook.com}
\address{Massachusetts Academy of Math and Science At WPI, MA}
\email{leeandrew1029@gmail.com}
\address{Massachusetts Academy of Math and Science At WPI, MA}
\email{kvseetharaman2@gmail.com}
\subjclass[2020]{Primary: 05A10, 05A19; Secondary: 11B65, 05A05}
	\keywords{Catalan numbers, Hypergraphs, Hypertrees,  Smirnov words.}  
\maketitle

\begin{abstract}
The Catalan numbers $C_n$ are an extremely well-studied sequence of numbers that appear as the answer to many combinatorial problems. Two generalizations of these numbers that have been studied are the Fuss-Catalan numbers and the Hypergraph Catalan numbers. In this paper, we study the combination of these, the Hypergraph Fuss-Catalan numbers. We provide some combinatorial interpretations of these numbers, as well as describe their generating function.
\end{abstract}

\section{Introduction}

The $\emph{Catalan numbers}$ $(C_n)_{n \geqslant 0}$ 
$$1,1,2,5,14,42,132,429,1430,\ldots$$ are one of the most ubiquitous sequences in mathematics. They have several combinatorial interpretations, some of which we include in Section~\ref{catalan}. The comprehensive source for combinatorial interpretations of the Catalan numbers is  \cite{stanley2015catalan}, which lists no less than 214 interpretations of these numbers. Given some $r \in \mathbb{N}$, most of these combinatorial interpretations readily generalize to a sequence $(FC_{n}^{(r)})_{n \geqslant 0}$ known as the \textit{Fuss-Catalan numbers}. For example,  \cite{bergeron2012combinatorics} defines these numbers by extending the notion of a Dyck path to a $r$-Dyck path. 

The main purpose of this paper is to further generalize the Fuss-Catalan numbers. We do this by combining the Fuss-Catalan numbers with another generalization of the Catalan number known as the \textit{Hypergraph Catalan numbers} \cite{MR4245285}.  For a given $m \in \NN$, the sequence of numbers $\{C_n^{(m)}\}_{n \geqslant 0}$ denotes the associated sequence of Hypergraph Catalan numbers. We more rigorously define these numbers in Section~\ref{hypergraph}. 

Next, we generalize the Hypergraph Catalan numbers to the \textit{Hypergraph Fuss-Catalan numbers} by adding the $r$ parameter to the Hypergraph Catalan number. 

In particular, for each positive integer $r,m$, we define a sequence of integers $\{FC_{n}^{(r,m)}\}_{n \geqslant 0}$, which is the associated sequence of \textit{Hypergraph Fuss-Catalan numbers}. These are formally defined in Definition \ref{def2.7}. When $m=1$, the sequence $\{FC_{n}^{(r,1)}\}_{n \geqslant 0}$ is equivalent to the sequence of Fuss-Catalan numbers $\{FC_{n}^{(r)}\}_{n \geqslant 0}$, and when $r=1$, the sequence $\{FC_{n}^{(1,m)}\}_{n \geqslant 0}$ is equivalent to the sequence of Hypergraph Catalan numbers $\{C_{n}^{(m)}\}_{n \geqslant 0}$. Below are more examples of Hypergraph Fuss-Catalan numbers for parameters $r,m > 1$ (parameters are kept small as the sequence of numbers tends to grow extremely fast).
\begin{align*}
& r=2,m=2: 1,1,144,1341648,693520980336,\ldots \\
& r=2,m=3: 1,1,480,200225,18527520,45589896150400,\ldots\\ 
& r=3,m=2: 1,1,11532,628958939250,163980917165716725552156,\ldots\\ 
& r=3,m=3: 1,1,38440,8272793255000,9396808005460764741084000,\ldots 
\end{align*}

The family of numbers $(FC_{n}^{(r,m)})_{n \geqslant 0}$ are defined in terms of counting walks on hypertrees, weighted by the order of their automorphism groups. This definition is created by generalizing the corresponding definition for the Catalan numbers. From our definition, it is not trivial that $(FC_{n}^{(r,m)})_{n \geqslant 0}$ are integers, but we prove this by giving several equivalent combinatorial interpretations of the sequence. 
In Section~\ref{pre}, we include basic definitions of Catalan numbers, Fuss-Catalan numbers, and Hypergraph Fuss-Catalan numbers. In Section~\ref{Co}, we include five interpretations of $(FC_{n}^{(r,m)})_{n \geqslant 0}$, which generalize five standard interpretations of $(C_{n})_{n \geqslant 0}$. Moreover, we show that all of these interpretations actually count the Hypergraph Fuss-Catalan numbers through a series of bijections. In Section~\ref{Genfunc}, we explain how to compute the generating function of $(FC_{n}^{(r,m)})_{n \geqslant 0}$, which allows for rapid computation of the Hypergraph Fuss-Catalan Numbers. 
\section{Preliminaries}\label{pre}

\subsection{Catalan Numbers}\label{catalan}

The \textit{Catalan Numbers} $\{C_n\}_{n \geqslant 0}$ are a very well-studied sequence of numbers that show up as the answer to many combinatorial problems. They are given by the explicit formula $$C_n = \frac{1}{n+1}{2n \choose n}$$ for all $n \geqslant 0$ and have generating function $$c\left(x\right) = \dfrac{1-\sqrt{1-4x}}{2}.$$ Five standard combinatorial interpretations of the Catalan numbers are given below as in \cite{stanley2015catalan}; these five interpretations will be of particular interest to us later in the paper. 

\begin{enumerate}
    \item The number of rooted plane trees on $n$ edges is $C_n$. 
    \item The number of binary trees with $n$ internal nodes is $C_n$.
    \item A \textit{Dyck path} is a path in the upper half of the Cartesian plane from $(0,0)$ to $(2n,0)$ such that each step either goes from $(x,y)$ to $(x+1,y+1)$ or $(x+1,y-1)$. The number of Dyck paths for a given integer $n$ is $C_n$.
    \item A \textit{ballot sequence} is a sequence of $1$s and $-1$s such that every partial sum of the sequence is nonnegative and the whole sequence sums to $0$. There are $C_n$ ballot sequences of length $2n$.
      \item $C_n$ is the number of ways to divide a regular $(n+2)$-gon into triangles without adding new vertices and by drawing $n - 1$ new diagonals.
\end{enumerate}

\begin{figure}
    \centering
    \includegraphics{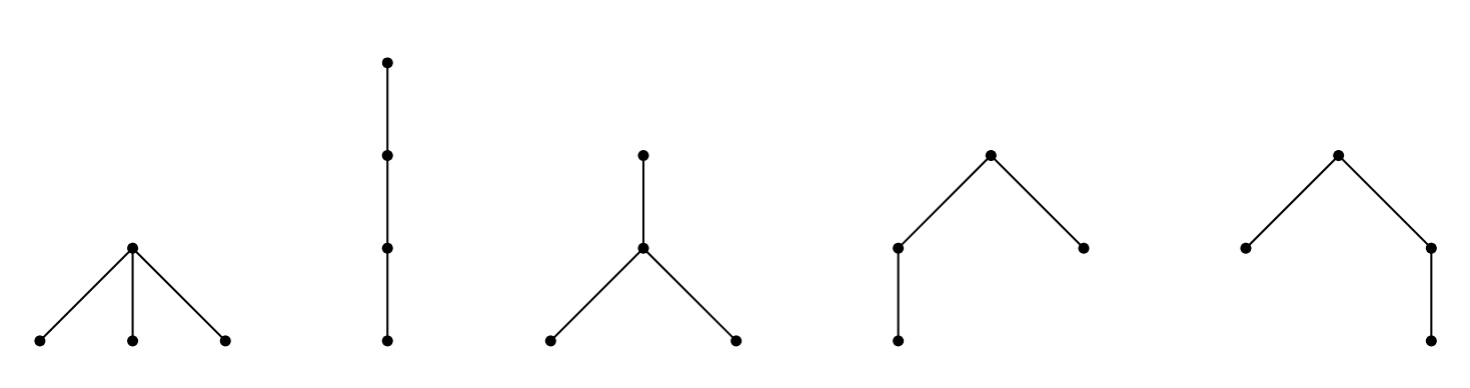}
    \caption{The 5 rooted plane trees with 3 edges.}
    \label{fig:rooted plane trees}
\end{figure}



We quickly note bijections between these interpretations to show that they all count the same sequence of numbers. In particular, a Dyck path can be obtained from a rooted plane tree by performing a preorder traversal of the tree (visits the root, then recursively visits each subtree defined by a child of the root until every vertex is visited). Consider constructing a Dyck path while performing the traversal. Every time an edge is traveled that goes further from the root, a segment of the form $(x,y) \rightarrow (x+1,y+1)$ is added to the Dyck path. Every time an edge is traveled that goes closer to the root, a segment of the form $(x,y) \rightarrow (x+1,y-1)$ is added to the Dyck path. The obtained path always stays at or above the $x$-axis because the $y$-coordinate of a point on the Dyck path corresponds to the distance from the root at that point in the traversal, which is always nonnegative. This process is reversible, which yields a bijection. 

To give a bijection between Dyck paths and ballot sequences, we note that a segment from $(x,y)$ to $(x+1,y+1)$ in a Dyck path corresponds to 1 in a ballot sequence, and a segment from $(x,y)$ to $(x+1,y-1)$ in a Dyck path corresponds to $-1$ in a ballot sequence. Although these two bijections are well-documented in the literature, we provide them here to make their eventual generalizations clearer.

\subsection{Hypergraph Catalan Numbers}\label{hypergraph}

The \textit{Hypergraph Catalan numbers} are another important generalization of Catalan numbers defined based on yet another interpretation, which we now provide. For a positive integer $n$, let $\T_n$ be the set of unlabeled trees on $n$ vertices. For a tree $T \in \T_n$ and vertex $v \in T$, define $a_T(v)$ to be the number of walks that:

\begin{itemize}
    \item Start and end at $v$. 
    \item Travel each edge twice, once going away from $v$ and once coming back from $v$ (in the sense of the tree being rooted at $v$).
\end{itemize}
Let $\Gamma(T)$ denote the automorphism group of $T$ and $|\Gamma(T)|$ be its order.
The relation between walks on trees and catalan numbers is given by following result:
\begin{prop}
The Catalan number $C_{n}$ is given by
$$C_{n} = \sum_{T \in \T_{n+1}} \sum_{v \in T} \frac{a_T(v)}{|\Gamma(T)|},$$ .

\end{prop}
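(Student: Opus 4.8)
The plan is to reinterpret the quantity $a_T(v)$ combinatorially and then count rooted plane trees in two different ways, with the automorphism factor $\tfrac{1}{|\Gamma(T)|}$ appearing as an orbit-counting device. First I would observe that $a_T(v)$ counts exactly the rooted-plane-tree structures carried by $T$ when it is rooted at $v$. Rooting $T$ at $v$, a closed walk that traverses each edge once away from $v$ and once back is precisely a depth-first traversal of the rooted tree, and such a traversal both determines and is determined by a choice of linear order on the children of each vertex. Hence $a_T(v) = \prod_{u} c_u!$, where $c_u$ is the number of children of $u$ in $T$ rooted at $v$, and this product is exactly the number of ways to endow the rooted tree $(T,v)$ with the structure of a rooted plane tree.

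Next I would set up the double count. By interpretation (1) of Section~\ref{catalan}, the number of rooted plane trees with $n$ edges, equivalently $n+1$ vertices, is $C_n$. I would organize these plane trees according to their underlying unlabeled tree $T \in \T_{n+1}$: fixing a representative of $T$, let $X_T$ be the set of pairs $(v,\sigma)$, where $v$ is a vertex and $\sigma$ is a choice of child-ordering at every vertex. By the previous paragraph, $|X_T| = \sum_{v \in T} a_T(v)$. The automorphism group $\Gamma(T)$ acts on $X_T$ via $g\cdot(v,\sigma) = (g(v),\, g\sigma)$, and two pairs yield isomorphic rooted plane trees exactly when they lie in a common $\Gamma(T)$-orbit. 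Since non-isomorphic underlying trees give non-isomorphic plane trees, $C_n$ equals the total number of orbits summed over all $T \in \T_{n+1}$.

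The crux, and the step I expect to be the main obstacle, is showing that this action is free, so that each orbit has size exactly $|\Gamma(T)|$ and the number of orbits contributed by $T$ is therefore $\sum_{v} a_T(v)/|\Gamma(T)|$. This reduces to the rigidity statement that a rooted plane tree has trivial automorphism group: any $g \in \Gamma(T)$ stabilizing $(v,\sigma)$ must fix the root $v$, hence, preserving the order $\sigma$ on its children, fixes each child, and by induction on distance from the root fixes every vertex, forcing $g = \mathrm{id}$. I would prove this induction carefully, as it is precisely what legitimizes the division by $|\Gamma(T)|$; this is exactly the point where a careless count would over- or under-count symmetric trees. Combining the free action with the two-way count then gives
$$C_n = \sum_{T \in \T_{n+1}} \frac{|X_T|}{|\Gamma(T)|} = \sum_{T \in \T_{n+1}} \sum_{v \in T} \frac{a_T(v)}{|\Gamma(T)|},$$
as claimed. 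An alternative to proving freeness directly would be to invoke Burnside's lemma on each $X_T$, but the rigidity of rooted plane trees makes the orbits genuinely free, so the cleaner route is to establish triviality of the stabilizers and avoid fixed-point bookkeeping.
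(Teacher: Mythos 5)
Your argument is correct and complete: the identification $a_T(v)=\prod_u c_u!$ with plane structures on $(T,v)$, the rigidity of rooted plane trees making the $\Gamma(T)$-action on pairs $(v,\sigma)$ free, and the resulting orbit count are all sound. Note that the paper itself does not prove this proposition (it defers to the cited reference of Gunnells), and your proof is essentially the standard argument given there, so nothing further is needed.
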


We do not provide the proof here, but instead refer the reader to \cite{MR4245285}. Figure~\ref{fig:c5} shows the calculation of $C_5$ in this manner. 

\begin{figure}
    \centering
    \includegraphics{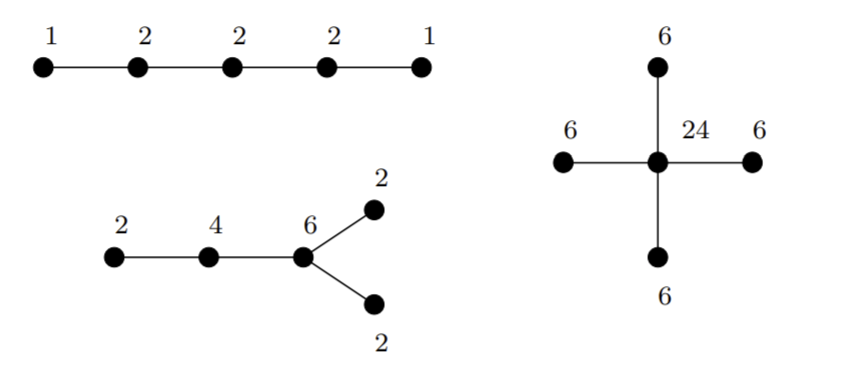}
    \caption{\cite{gunnells2021handout}  Calculating $C_5$ by walking on trees. Each vertex of each tree is labeled with the value of $a_T(v)$.}
    \label{fig:c5}
\end{figure}

The Hypergraph Catalan numbers generalize the above formula but with an additional parameter $m$. More specifically, for a tree $T \in \T_n$ and vertex $v \in T$, define $a_T^{(m)}(v)$ to be the number of walks that:

\begin{enumerate}
    \item Start and end at $v$.
    \item Travel each edge $2m$ times, $m$ times going away from $v$ and $m$ times coming back. 
\end{enumerate}
\begin{definition}
 
The Hypergraph Catalan numbers $C_{n}^{(m)}$ are defined by
 $$C_{n}^{(m)} = \sum_{T \in \T_{n+1}} \sum_{v \in T} \frac{a_T^{(m)}(v)}{|\Gamma(T)|}.$$  
 \end{definition}
 Information on the generating function as well as combinatorial interpretations of these numbers are included in \cite{MR4245285}. 

If $m=2$, the first few Hypergraph Catalan numbers are $$C_0^{(2)} = 1, C_1^{(2)}=1, C_2^{(2)}=6, C_2^{(3)}=57, C_2^{(4)}=678,$$ and so on. The calculation for $C_2^{(4)}$ using the tree-walking method is shown in Figure~\ref{fig:hc4}. 

\begin{figure}
    \centering
    \includegraphics{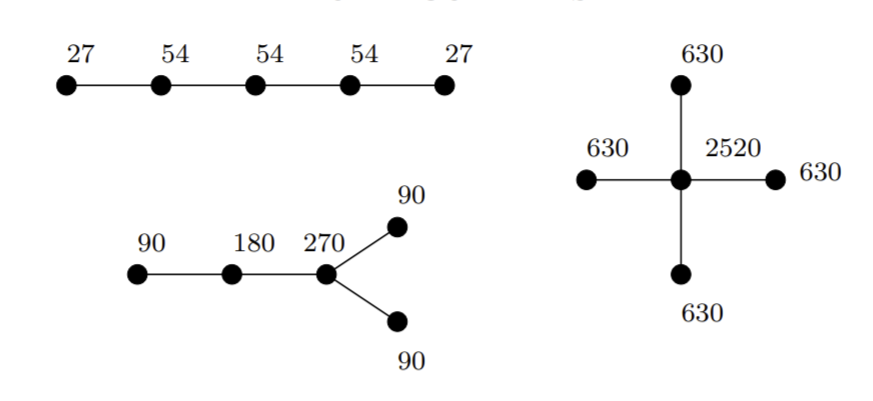}
    \caption{\cite{gunnells2021handout}  Calculating $C_4^{(2)}$ by walking on trees. Each vertex of each tree is labeled with the value of $a_T^{(2)}(v)$.}
    \label{fig:hc4}
\end{figure}

\subsection{Fuss-Catalan Numbers}\label{fuss1}

The \textit{Fuss-Catalan numbers} are a natural generalization of the Catalan numbers. Each positive integer $r$ yields a sequence of Fuss-Catalan numbers $FC_n^{(r)}$ for $n \geqslant 0$. In particular, we have $$FC_n^{(r)} = \frac{1}{rn+1}{\left(r+1\right)n \choose n}$$ for all $n \geqslant 0$. Note that $r=1$ corresponds to the standard Catalan numbers. 

Furthur examples are as follows
\begin{align*}
& r=2: \,\,\, 1, 1, 3, 12, 55, 273, 1428, 7752, 43263, 246675 , \ldots \\
& r=3: \,\,\, 	1, 1, 4, 22, 140, 969, 7084, 53820, 420732, 3362260, \ldots \\
& r=4: \,\,\, 1, 1, 5, 35, 285, 2530, 23751, 231880, 2330445, 23950355 , \ldots
\end{align*}
which can be found in OEIS $A001764$, $A002293$ and $A002294$. A \textit{hypergraph} is a generalization of a graph such that edges are non-empty subset of the set of vertices, which are called \emph{hyperedges}. Formally we define a hypergraph as given in \cite{berge1973graphs} .

 \begin{definition}
     Let $X = \{x_1,x_2, \ldots,x_n\}$ be a finite set and let $E = \{E_i | i \in I\}$ be a finite family of subsets of $X$. The pair $(X,E)$ is said to be a \emph{hypergraph} if 
\begin{enumerate}
    \item $E_i \neq \phi \, \forall i \in I$
    \item $\cup_{i \in I} E_i = X$
\end{enumerate}
 $E$ is called the set of \emph{hyperedges}. Call a \emph{hypergraph}  \emph{$k$-uniform} if $|E_i|=k$ for all $i \in I$.
 \end{definition}
 We will be working with uniform hypertrees throughout the paper which are defined below.
 \begin{definition}
    A $k$-uniform \emph{hypertree} on $n$ \emph{hyperedges} is a $k$-uniform \emph{hypergraph} on $n$ \emph{hyperedges} with $n(k-1)+1$ vertices.
    \end{definition}
    A \emph{rooted uniform hypertree} is a \emph{uniform} hypertree with a distinguished hyperedge called \emph{root hyperedge}. We call one of the lefmost or topmost vertex from the \emph{root hyperedge} as \emph{root vertex}. We call a \emph{rooted uniform hypertree} a $(r+1)$\emph{uniform rooted plane hypertree} if it contains a single hyperedge $e$ with $r+1$ vertices or else it has a subsequence $(P_1,P_2,\ldots,P_{r+1})$ of $(r+1)$uniform rooted plane hypertrees $P_i$,\, $1\leq i \leq r+1$. Thus the subtrees attached to the root hyperedge are linearly ordered. When drawing such trees the root $e$ is written on top with root vertex of $P_i$
    overlapping the $i^{\text{th}}$ of $e$ relative to the \emph{root vertex} of $e$. For instance \ref{fig:12hypertrees} shows two hypertrees $X$ and $Y$ with the horizontal hyperedge the root hyperedge and leftmost vertex of horizontal hyperedge the root vertex. $Y$ is not a $3$-uniform rooted plane hypertree because root vertex of the vertical hyperedge is the topmost vertex which is not attached to the root hyperedge.
    
    Fuss-Catalan numbers also count uniform rooted plane hypertrees as established by the following Theorem. We didn't find this interpretation in literature. Figure \ref{fig:12hypertrees} illustrates the $12$, $3$-uniform hypertrees on $3$ edges and it can be checked from the examples that $FC_{3}^{(2)} = 12$. 
    \begin{figure}
    \centering
    \includegraphics[width=5cm,height=3.3cm,keepaspectratio]{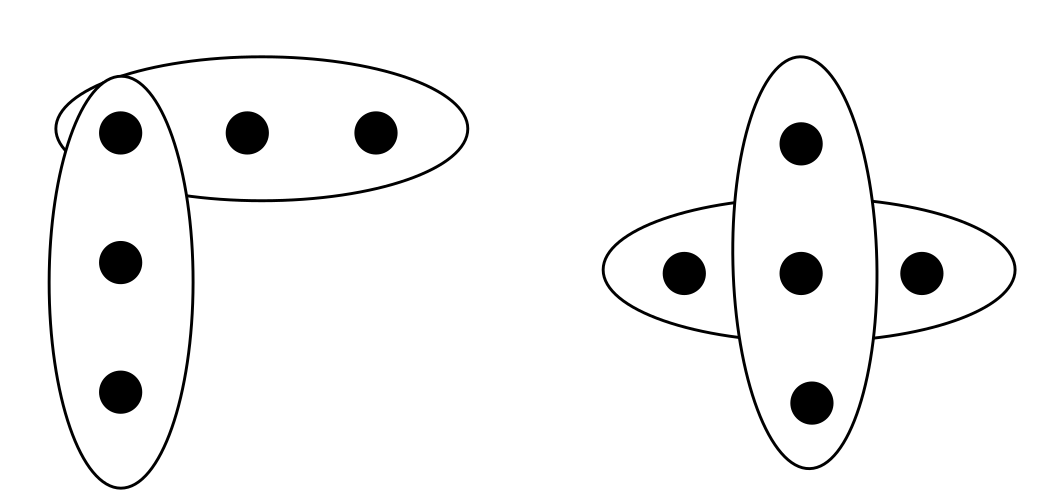}
    \caption{Two $3$-uniform hypertrees $X$ and $Y$. $Y$ is not a $3$-uniform rooted plane hypertree.}
    \label{fig:hypertree}
\end{figure}
\begin{figure}
    \centering
    \includegraphics[width=14cm,height=7cm,keepaspectratio]{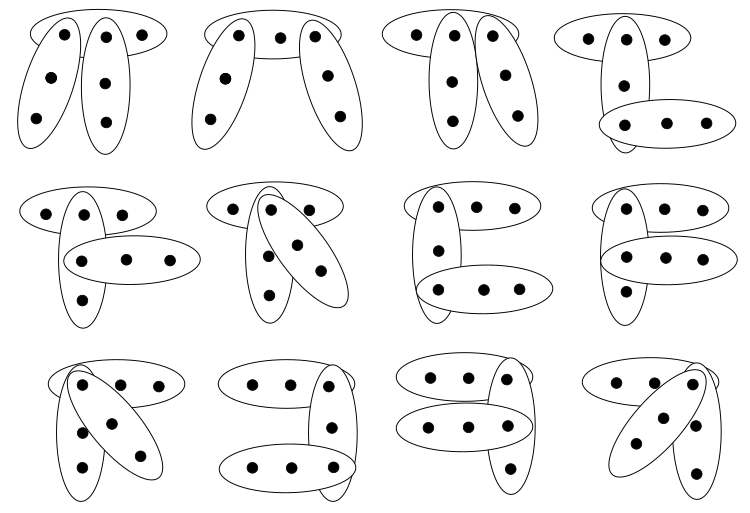}
    \caption{$3$-uniform rooted plane hypertrees on $3$ edges where the topmost horizontal hyperedge is the root hyperedge for each hypertree. }
    \label{fig:12hypertrees}
\end{figure}
\begin{thm}

The number of $\left(r+1\right)$ uniform rooted plane hypertrees on $n$ hyperedges with ordering specified for children is $FC_{n}^{(r)}$
\end{thm}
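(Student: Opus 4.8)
The plan is to pass to generating functions and show that the counting sequence obeys the defining functional equation of the Fuss-Catalan numbers. Write $f_n$ for the number of $(r+1)$-uniform rooted plane hypertrees on $n$ hyperedges, with the convention $f_0 = 1$ (the hypertree on $0$ hyperedges being a single isolated vertex), and set $F(x) = \sum_{n \geqslant 0} f_n x^n$. The recursive definition of a rooted plane hypertree is tailor-made for a decomposition: a hypertree with $n \geqslant 1$ hyperedges is determined by its root hyperedge $e$ together with the ordered tuple $(P_1,\dots,P_{r+1})$ of rooted plane hypertrees hanging off the $r+1$ vertices of $e$, where the root vertex of $P_i$ is identified with the $i$-th vertex of $e$ and each $P_i$ is allowed to be the empty hypertree.

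First I would verify that this is a bijection between $(r+1)$-uniform rooted plane hypertrees on $n \geqslant 1$ hyperedges and ordered $(r+1)$-tuples of such hypertrees whose hyperedge-counts sum to $n-1$. The forward map deletes $e$ and records the subtrees in their left-to-right planar order; the inverse glues the roots of $P_1,\dots,P_{r+1}$ onto a fresh root hyperedge. A bookkeeping check confirms consistency on vertices: if $P_i$ has $n_i$ hyperedges it has $rn_i+1$ vertices, and identifying its root with a vertex of $e$ contributes $rn_i$ new vertices, so the assembled hypertree has $(r+1) + \sum_i rn_i = r(n-1) + (r+1) = rn+1$ vertices and $1 + \sum_i n_i = n$ hyperedges, exactly as required. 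Translating the bijection into generating functions—the root hyperedge accounting for the factor $x$ and the $r+1$ independent ordered subtrees for the factor $F(x)^{r+1}$—yields
\[
F(x) = 1 + x\,F(x)^{r+1}.
\]

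It remains to identify $F$ with the Fuss-Catalan generating function. The displayed equation together with $F(0)=1$ determines every coefficient $f_n$ uniquely, since the coefficient of $x^n$ on the right depends only on $f_0,\dots,f_{n-1}$; hence it suffices to check that $B(x) := \sum_{n \geqslant 0} FC_n^{(r)} x^n$ satisfies the same equation. Writing $B = 1 + S$, the relation becomes $S = x(1+S)^{r+1}$, which is of the Lagrange-inversion form $S = x\,\Phi(S)$ with $\Phi(S) = (1+S)^{r+1}$. Lagrange inversion then gives
\[
[x^n]\,S = \frac{1}{n}\,[S^{n-1}](1+S)^{(r+1)n} = \frac{1}{n}\binom{(r+1)n}{n-1},
\]
and a short manipulation of factorials rewrites this as $\frac{1}{rn+1}\binom{(r+1)n}{n} = FC_n^{(r)}$; thus $B$ solves $B = 1 + xB^{r+1}$. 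By uniqueness $F = B$, and therefore $f_n = FC_n^{(r)}$ for all $n$.

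The generating-function bookkeeping and the Lagrange-inversion computation are routine. The step requiring the most care is turning the recursive definition into this clean functional equation: the definition splits into a ``single hyperedge'' case and a ``decomposes as $(P_1,\dots,P_{r+1})$'' case, and one must reconcile these into a single recursion in which each subtree may be empty, with the all-empty tuple producing exactly the single-hyperedge base case. Checking that the map ``delete the root hyperedge and record the ordered tuple of subtrees'' is well-defined with a two-sided inverse, and that it contributes precisely the constant term $1$ and the term $xF^{r+1}$, is the heart of the argument. The non-example $Y$ of Figure~\ref{fig:hypertree} is a reminder that the rooted-plane and root-vertex conventions are essential here: it is exactly these conventions that force each subtree to attach canonically through its own root vertex and in a well-defined linear order, so that the decomposition genuinely produces an ordered $(r+1)$-tuple rather than a less rigid configuration.
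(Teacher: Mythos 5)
Your proposal is correct and follows essentially the same route as the paper: decompose a nonempty hypertree into its root hyperedge and the ordered $(r+1)$-tuple of subtrees, derive the functional equation $F = 1 + xF^{r+1}$, substitute $S = F - 1$ to get $S = x(1+S)^{r+1}$, and apply Lagrange inversion to extract $\frac{1}{rn+1}\binom{(r+1)n}{n}$. Your added care about the empty-subtree convention, the vertex-count bookkeeping, and the uniqueness of the power-series solution fills in details the paper leaves implicit, but the argument is the same.
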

\begin{proof} Let $\mathcal{T}_{\left(r+1\right)}$ be the class of $\left(r+1\right)$ uniform plane hypertrees with. Fix an $\left(r+1\right)$ uniform hyperedge which will be the root hyperedge. Then any hypertree can be specified by a sequence  $(T_1,T_2,\ldots,T_{r+1})$ of rooted plane hypertrees which are dangling from Left to right relative to the root vertex in the root hyperedge from definition. This shows that the generating function $T_{(r+1)}(x)$ of $(r+1)$-uniform rooted plane hypertrees satisfies $T_{\left(r+1\right)}\left(x\right)=1+xT^{\left(r+1\right)}\left(x\right)^{r+1}$. The function $T'_{\left(r+1\right)}\left(x\right)=T'_{r+1}\left(x\right)-1$ satisfies $T'_{r+1}\left(x\right)=x\left(1+T'_{r+1}\left(x\right)\right)^{r+1}$. Thus using Lagrange inversion
$$\left[x^n\right]T'_{\left(r+1\right)}\left(x\right)=\frac{1}{n}\left[z^{n-1}\right]\left(1+z\right)^{n\left(r+1\right)}=\frac{1}{n}\binom{n\left(r+1\right)}{n-1}=\frac{1}{rn+1}\binom{n\left(r+1\right)}{n}$$
as desired.
\end{proof}

We provide interpretations of the Fuss Catalan numbers which are generalizations of the combinatorial interpretations provided for Catalan numbers. These can be found \cite{hilton1991catalan}.
\begin{enumerate}
    \item The number of rooted $\left(r+1\right)$-uniform plane hypertrees with $n$ hyperedges with ordering specifies for children. (An $\left(r+1\right)$-uniform hypertree is a generalization of a tree, where each edge is a set of $r+1$ distinct vertices and the hypertree has $rn+1$ vertices, if it has $n$ hyperedges).
    \item The number of incomplete $\left(r+1\right)$-ary trees with $n+1$ vertices .
    \item We generalize the Dyck path to an $r$-Dyck path, which is a path from $(0,0)$ to $(\left(r+1\right)n,0)$ such that each step either goes from $(x,y)$ to $(x+1,y+1)$ or to $(x+1,y-r)$, with the path at or above the $x$-axis at all times. The number of $r$-Dyck paths for some $n$ is $FC_n^{(r)}$. 
    \item We generalize a ballot sequence to an $r$-ballot sequence, which is a sequence of $1$ and $-r$ such that every partial sum of the sequence is nonnegative and the whole sequence sums to 0. There are $FC_n^{(r)}$ $r$-ballot sequences in which $1$ appears $rn$ times and $-r$ appears $n$ times.
    \item The number of ways to divide a regular $(nr+2)$-gon into $(r+2)$-gons without adding new vertices and by drawing $n-1$ new diagonals
\end{enumerate}

We can also define the Fuss-Catalan numbers in a similar manner to the definition of the Catalan numbers given in Section~\ref{hypergraph}. Specifically, instead of considering trees on $n$ vertices, we consider $\left(r+1\right)$-uniform rooted plane hypertrees with $n$ hyperedges. Let $\mathcal{H}_n^{(r)}$ be the set of unlabeled $r$-uniform rooted plane hypertrees with $n$ hyperedges. 

Given any $T \in \mathcal{H}_n^{(r)}$, we can define an associated bipartite graph $\Tilde{T}$ that is often easier to work with. Specifically, represent each vertex in $T$ by a black vertex in $\Tilde{T}$, and each hyperedge in $T$ by a white vertex in $\Tilde{T}$. The black and white vertices form the two groups of vertices in the bipartite graph. A black vertex is connected to a white vertex in $\Tilde{T}$ exactly when the corresponding vertex in $T$ is contained in the hyperedge represented by the white vertex. This implies each white vertex in $\Tilde{T}$ has degree $r+1$. A hypertree $T$ and its associated $\Tilde{T}$ are shown in Figure~\ref{fig:association}. Note that this means a walk on a hypertree $T \in \mathcal{H}_n^{(r)}$ corresponds to a walk on $\Tilde{T}$ that starts on a black vertex. However, we also require there to be no loops in our walk. This corresponds to no subsequences of the form $v_1v_2v_1$ in our walk on $\Tilde{T}$, where $v_1$ is black and $v_2$ is white. 

\begin{figure}
    \centering
    \includegraphics[width=12cm,height=5cm,keepaspectratio]{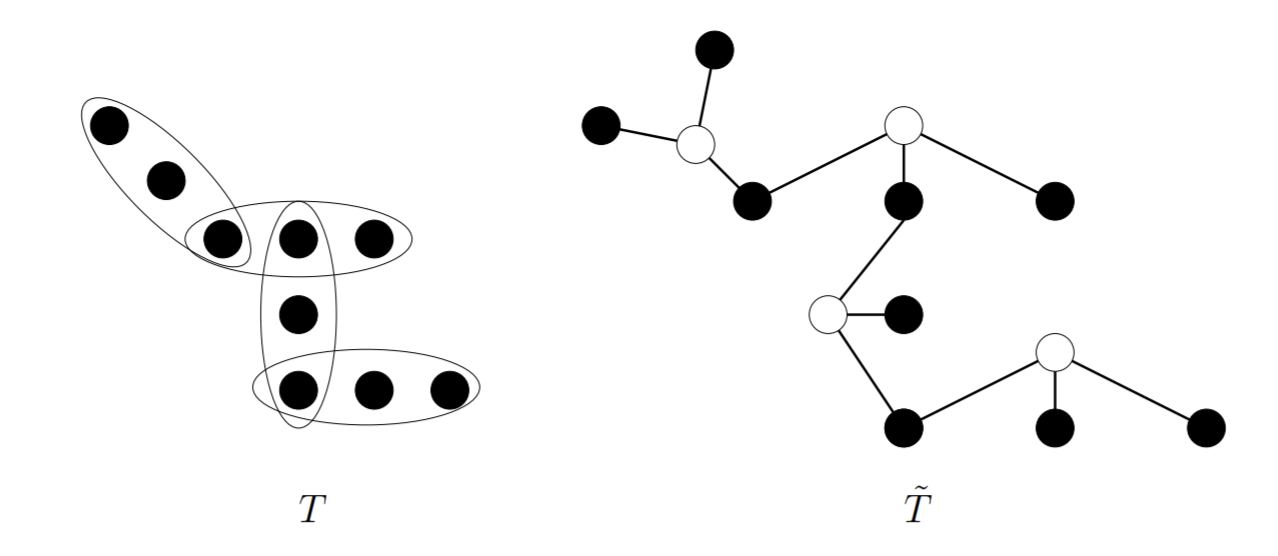}
    \caption{\cite{gunnells2021handout} A hypertree $T$ and its associated $\Tilde{T}$.}
    \label{fig:association}
\end{figure}

Take a $T \in \mathcal{H}_n^{\left(r+1\right)}$ and consider its associated $\Tilde{T}$. For any black vertex $v \in \Tilde{T}$, define $a_T^{(r)}(v)$ to be the number of walks on $\Tilde{T}$ that:

\begin{enumerate}
    \item Begin and end at $v$.
    \item Visit every black vertex of $\Tilde{T}$. 
    \item Use every edge of $\Tilde{T}$ twice, once going away from $v$ and once coming back to $v$. 
    \item There exist no subsequences of the form $v_1v_2v_1$, where $v_1$ is black and $v_2$ is white. 
\end{enumerate}

The relation between walks on Hypergraphs and Fuss Catalan numbers is given by the following result:
\begin{prop}
The Fuss Catalan number $FC_{n}^{(r)}$ is given by
$$FC_n^{(r)} = \sum_{T \in \mathcal{H}_n^{\left(r+1\right)}} \sum_{v \in \Tilde{T}} \frac{a_T^{(r)}(v)}{|\Gamma(T)|},$$ 
where the inner sum is taken over black vertices of $\Tilde{T}$
\end{prop}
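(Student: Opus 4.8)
The plan is to show that the right-hand side counts, with neither over- nor under-counting, the isomorphism classes of $(r+1)$-uniform rooted plane hypertrees on $n$ hyperedges; by the preceding Theorem this number is exactly $FC_n^{(r)}$. The mechanism is an orbit-counting argument modeled on the proof of the Catalan case in \cite{MR4245285}: the factor $1/|\Gamma(T)|$ is precisely what converts a count of labeled walk-structures into a count of unlabeled (isomorphism-class) structures, once one checks that the relevant group action is free.

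First I would fix, for each unlabeled $(r+1)$-uniform hypertree $T \in \mathcal{H}_n^{(r+1)}$, a labeled representative together with its bipartite graph $\tilde{T}$, and establish a bijection between the walks on $\tilde{T}$ enumerated by $a_T^{(r)}(v)$ (as $v$ ranges over all black vertices) and the ways of equipping this labeled $T$ with the data of a root vertex and a plane structure. Concretely, a walk counted by $a_T^{(r)}(v)$ is a depth-first Euler tour of $\tilde{T}$ based at $v$: using every edge exactly twice (once away, once back) makes it a closed out-and-back traversal, the no-backtracking condition on subsequences $v_1 v_2 v_1$ forbids entering a hyperedge (white vertex) and immediately returning, forcing each descent to reach a genuinely new vertex, and visiting every black vertex guarantees the whole hypertree is explored. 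The order in which the tour selects successive vertices inside each hyperedge records exactly a left-to-right ordering of the subtrees hanging off that hyperedge, i.e.\ the plane structure; conversely each (root vertex, plane structure) determines a unique such tour. Hence $\sum_{v} a_T^{(r)}(v)$ equals the number of realizations of the labeled $T$ as a rooted plane hypertree.

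Next I would run the orbit count. The automorphism group $\Gamma(T)$ acts on the color-preserving bipartite graph $\tilde{T}$, and since all four defining conditions are automorphism-invariant, $\Gamma(T)$ permutes the set $W = \bigsqcup_{v} \{\text{walks counted by } a_T^{(r)}(v)\}$, so that $|W| = \sum_v a_T^{(r)}(v)$. This action is free: if $\sigma \in \Gamma(T)$ fixes a walk $w = (u_0, u_1, \ldots, u_L)$ as a sequence, then $\sigma u_i = u_i$ for every $i$, and because $w$ visits every black vertex (condition 2) and every white vertex (every edge of $\tilde{T}$ is traversed), $\sigma$ fixes all vertices, whence $\sigma = \mathrm{id}$. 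By the orbit--stabilizer theorem each $\Gamma(T)$-orbit in $W$ has size $|\Gamma(T)|$, so $\frac{1}{|\Gamma(T)|}\sum_v a_T^{(r)}(v)$ is the number of orbits. As the bijection of the previous step is $\Gamma(T)$-equivariant, these orbits correspond bijectively to isomorphism classes of rooted plane hypertrees whose underlying hypertree is $T$.

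Finally, summing over all $T \in \mathcal{H}_n^{(r+1)}$ and noting that every $(r+1)$-uniform rooted plane hypertree on $n$ hyperedges has a unique underlying unlabeled hypertree, the total equals the number of such rooted plane hypertrees, which is $FC_n^{(r)}$ by the Theorem. The main obstacle is the bijection of the second paragraph: one must verify carefully that the four walk conditions characterize exactly the Euler tours encoding plane structures (no spurious walks, none missed) and that this correspondence is genuinely equivariant. The freeness and the final summation are then routine.
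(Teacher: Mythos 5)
The paper gives no proof of this proposition at all: it is stated bare, with the $r=1$ case attributed to Gunnells and the integrality of $FC_n^{(r)}$ deferred to the later labelling theorem. So your argument is supplying a proof where the paper has none, and it is the natural one — the direct generalization of the orbit-counting proof of the Catalan-number analogue. Your three ingredients all hold up: for $m=1$ the conditions force the walk to be a closed tour of the tree $\tilde T$ using each edge once in each direction, which is automatically a depth-first traversal (in fact conditions (2) and (4) are consequences of (3) in this case, since an excursion into a subtree cannot be interrupted in a tree); the action of $\Gamma(T)$ on the set of all such tours is free because a tour visits every vertex of $\tilde T$; and orbit--stabilizer plus the theorem counting rooted plane hypertrees (the $T=1+xT^{r+1}$ Lagrange-inversion argument) finishes. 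The one step I would ask you to spell out is the bijection itself: a plane structure on $(T,v)$ carries two kinds of ordering data — the assignment of the $r$ non-parent vertices of each hyperedge to the positions $2,\dots,r+1$, and the linear ordering, as a chain in the recursive decomposition, of the several hyperedges sharing a given vertex — and the tour records both, the first through the order in which the $r$ children of a white vertex are entered and the second through the order in which the child hyperedges of a black vertex are entered. Once both are accounted for, each side has cardinality $\prod_{e} r!\cdot\prod_{u} k(u)!$ (where $k(u)$ is the number of child hyperedges of the black vertex $u$), equivariance is clear, and the proof is complete; a quick check on the unique hypertree with two hyperedges for $r=2$ gives $(8+4\cdot 4)/8=3=FC_2^{(2)}$, consistent with your scheme.
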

 An example of this computation is shown in Figure~\ref{fig:fuss2}. It is immediately not clear how $FC_{n}^{r}$ are integers from the proposition. However we prove it in Theorem \ref{3.2} by giving their combinatorial interpretations.
\begin{figure}
    \centering
    \includegraphics[width=14cm,height=7cm,keepaspectratio]{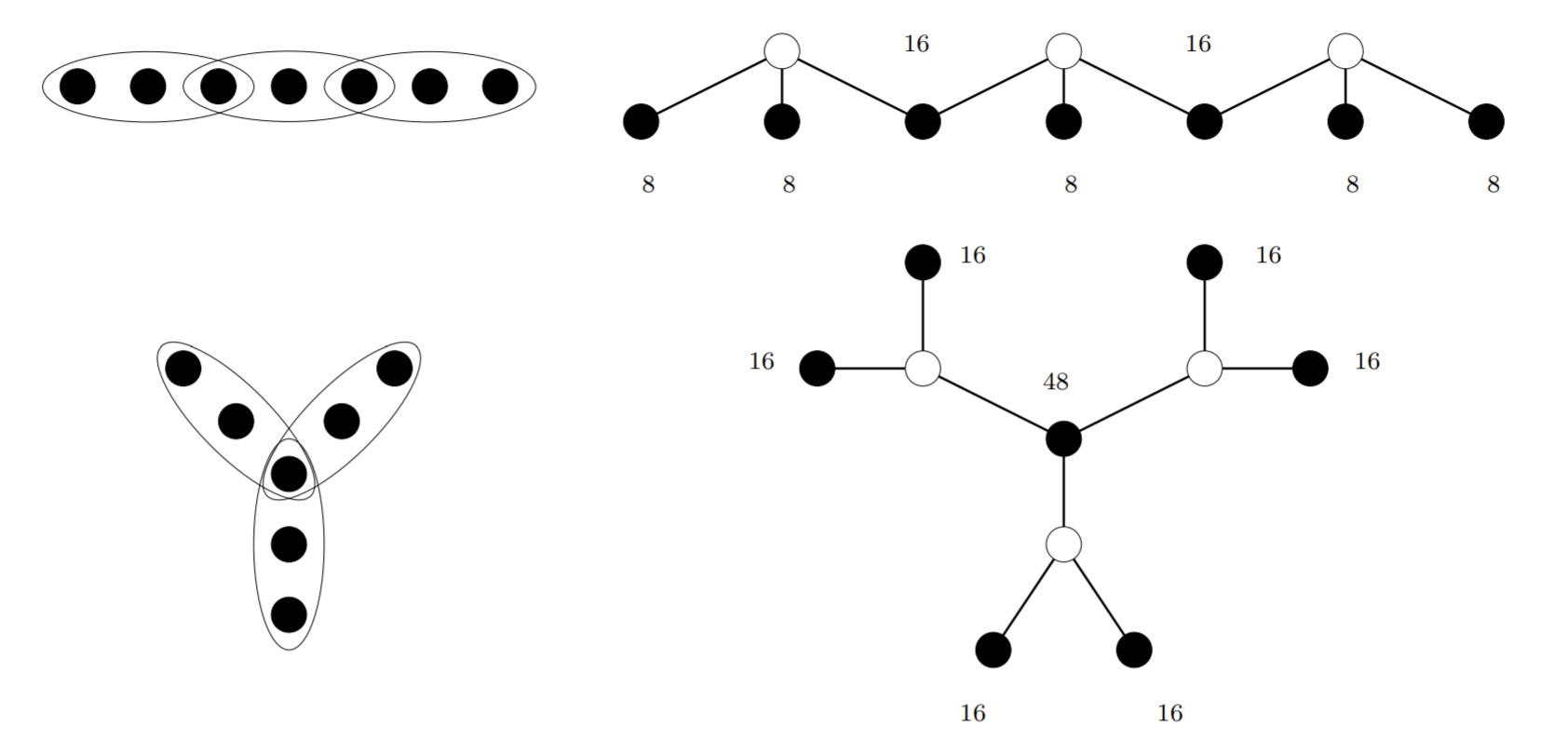}
    \caption{\cite{gunnells2021handout} Calculating $FC_3^{(2)}$ using the tree walking method.}
    \label{fig:fuss2}
\end{figure}
\subsection{Hypergraph Fuss-Catalan Numbers}\label{hypergraph fuss}

The main objects of interest in this paper are the \textit{Hypergraph Fuss-Catalan numbers}. These are a combination of the Hypergraph Catalan numbers and Fuss-Catalan numbers. To define them, we generalize the walks on hypertrees as given in definition of $FC_n^{(r)}$ in a similar way to the way Hypergraph Catalan numbers generalize Catalan numbers.

Given $T \in \mathcal{H}_n^{\left(r+1\right)}$, we consider its associated $\Tilde{T}$, and for any black vertex $v \in \Tilde{T}$, we define $a_T^{\left(r,m\right)}(v)$ to be the number of walks on $\Tilde{T}$ that:

\begin{enumerate}
    \item Begin and end at $v$.
    \item Use every edge of $\Tilde{T}$ exactly $2m$ times, $m$ times going away from $v$ and $m$ times coming back to $v$. 
    \item There exist no subsequences of the form $v_1v_2v_1$, where $v_1$ is black and $v_2$ is white. 
\end{enumerate}
\begin{definition}\label{def2.7}
 Let $T \in \mathcal{H}_n^{\left(r+1\right)}$ and $\Tilde{T}$ be its associated graph. Define  $$FC_n^{\left(r,m\right)} = \sum_{T \in \mathcal{H}_n^{\left(r+1\right)}} \sum_{v \in \Tilde{T}} \frac{a_T^{\left(r,m\right)}(v)}{|\Gamma(T)|},$$ where the inner sum is taken over the black vertices of $\Tilde{T}$. 
\end{definition}

\section{Combinatorial Interpretations}\label{Co}

As in Catalan numbers and the Fuss-Catalan numbers, the hypergraph Fuss-Catalan numbers $FC_{n}^{\left(r,m\right)}$ have similar combinatorial interpretations in terms of objects used to count $FC_{mn}^{\left(r\right)}$. We use Gunnell's approach in \cite{MR4245285}  for regular Hypergraph Catalan numbers and generalize it to work for Hypergraph Fuss-Catalan numbers. \\
Let $X$ be a combinatorial object. The reader should imagine of $X$ being some standard interpretation of Fuss-Catalan numbers as given in \ref{fuss1}. We give examples in \ref{3.1}. $X$ will usually be a set of smaller elements $x$, and we say that level structure for $X$ is a surjective map $l$ from these elements $x$ to a finite set $\left[N\right]=\{1,2,\ldots,N\}$ for some $N \in \NN$ of labels. We say that $x \in X$ is on a higher level than $x' \in X$ if $l\left(x\right)>l\left(x'\right)$ with similar convention for same and lower level. The $i^{\text{th}}$ level $X_i \subset X$ with respect to $l$ will be $l^{-1}(i)$. Note that $l$ will typically correspond to height of elements in $X$.
\\
We say that $x$ is \emph{parent} of \emph{child} $x'$ if $l\left(x'\right)=l\left(x+1\right)$. We will consider $m$-labelling of levels of $X$. First we fix an infinite set $L$ of labels. Next, let \[X = X_0 \bigsqcup_{i \geqslant 1} X_i\] be disjoint union of levels. For each level $X_i$ we choose a set partition into subsets of order $m$; this implies $\left|X_i\right| \equiv 0 \pmod m$. We say that a labelling is admissible if the following conditions are satisfied.
\begin{enumerate}
    \item Distinct subsets receive distinct labels
    \item Elements in same partition are in same class(which we define later)
    \item If two elements $x,x'$ share the same label then the labels of their parents agree
\end{enumerate}
We also consider two labellings equivalent if one is obtained from other by permuting the labels.
In each of our combinatorial interpretations, we define a set of objects $\mathscr{X}_{mn}^{\left(r\right)}$ constituting combinatorial interpretations of $FC_{n}^{\left(r\right)}$. Each of these objects consist of elements, for which we will define which elements have a parent/child relation. 

\begin{figure}
    \centering
    \includegraphics[width=7.4cm,height=4.5cm,keepaspectratio]{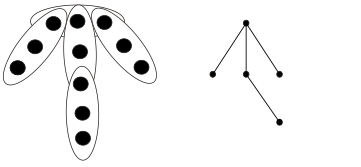}
    \caption{A $3$ uniform hypertree and corresponding ternary tree where the directions are left, middle and right.}
    \label{fig:fuss3}
\end{figure}
\begin{figure}
    \centering
    \includegraphics[width=8cm,height=4cm,keepaspectratio]{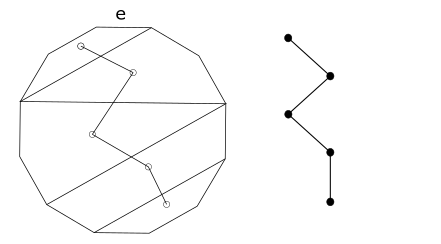}
    \caption{Polygonal division and corresponding ternary tree}
    \label{fig:fuss4}
\end{figure}

Next, we define level structure on $\mathscr{X}_{mn}^{\left(r\right)}$ and prove that
\[FC_{n}^{\left(r,m\right)} = \left(FC_{1}^{\left(r,m\right)}\right)^n \sum_{X \in \mathscr{X}_{mn}^{\left(r\right)}} N_m(X),\]
where $N_m(X)$ denotes admissible $m$-labellings of $X$.

Next, we present several combinatorial interpretations 
of $\mathscr{X}_{mn}^{\left(r\right)}$.
\begin{enumerate}
    \item \textbf{Plane Hypertrees.}
    The set $\mathscr{X}_{1,mn}^{\left(r\right)}$ is the set of $(r+1)$-uniform rooted plane hypertrees on $rnm+1$ vertices. The elements of a plane hypertree are its hyperedges. Moreover, the level of an hyperedge is its maximum distance to the root from any of its vertices. Two edges are of the same class if they both contain the root and  the vertices they share with their parents correspond to each other in respect to the parents.
    \item \textbf{Dyck Paths.}
    The set $\mathscr{X}_{2,mn}^{\left(r\right)}$ is the set of $r$-Dyck paths of length $mn$ (paths from $(0,0)$ to $((r+1)mn,0)$ where each step is of the form $(x,y) \to (x+1,y+r)$ or $(x,y) \to (x+1,y-1)$ and the path always stays above the $x$-axis), where the elements of a path are its slabs. We define a slab to be connected component that is bounded by the Dyck path, $y = a$ and $y = a + r$ for some nonnegative integer $a$ such that its left bound is an $(x,y) \to (x+1,y+r)$ step in the Dyck path. A slab $S$ is the parent of slab $S'$ if the the bottom horizontal edge of $S'$ is contained in the top horizontal edge of $S$. Moreover, two slabs are of the same class if and only if they have the same congruency class modulo $r$.
    
    \item \textbf{Ballot Sequences.}
    The set $\mathscr{X}_{3,mn}^{\left(r\right)}$ is the set of ballot sequences $$B = \left(a_1, a_2, \ldots, a_{mn\left(r+1\right)}\right),$$ where $a_i = r$ or $-1$ and $\sum_1^{mn\left(r+1\right)} a_i = 0$. Let $s_k = \sum_1^k a_i$. We say $(i,j)$, $i < j$, is a pair if (i) $a_i = r, a_j = -1$, (ii) $s_i = s_j + r$, and (iii) $j$ is the minimal index greater than $i$ for which the conditions hold true. The elements of a ballot sequence are its pairs. The pair $(i,j)$ is the parent of of pair $(k,l)$ if $s_i + k = s_k$, $i < k$, and $j > l$. Moreover, two pairs, $(i, j), (k,l)$ are of the same class if and only if $s_i = s_k \pmod r$.
    
    \item \textbf{$(r+1)$-ary Trees.}
    The set $\mathscr{X}_{4,mn}^{\left(r\right)}$ is the set of $(r+1)$-ary trees with $mn$ vertices. In an $(r+1)$-ary tree, each vertex has $r+1$ possible positions directly below it. We say an edge of a path is a left step if the child is in one of the left $r$ positions under the vertex above it. It is a right step if the child is in the rightmost position under the vertex above it. A vertex $a$ is a parent of vertex $b$ if there is a path from $a$ to $b$ with each step going away from the root such that it contains exactly one left step. Two vertices are in the same class if they are in the same position under the vertices above them. The root is in the same class as vertices in the $(r+1)$th position.
    
    \item \textbf{Polygonal Divisions.}
    Consider a polygon $A$ with $rmn+2$ sides. Then set $\mathscr{X}_{5,mn}^{\left(r\right)}$ is the set of divisions of $A$ into $(r+2)$-gons containing no new vertices. The elements of a division $\Delta$ are its $(r+2)$-gons. Its levels are determined as follows. Fix once and for all an edge $e$ of $A$. Take polygon $P$ to be the polygon in $\Delta$ containing $e$. Entering $P$ from $e$, there are $(r+1)$ remaining edges, of which we label the $r$ leftmost edges to be left edges and the rightmost edge to be a right edge. As you pass through an edge, you label the remaining edges and continue. A polygon $Q$ is a parent of a polygon $Q'$ if $Q'$ can be reached from $Q$ by passing through a single right edge then an arbitrary number of left edges. Two polygons are in the same class if the edges they shared with their parents are the same number from the left in their parents.

\end{enumerate}

In the course of following Theorem we prove that $\mathscr{X}_{i,mn}^{(r)}$ give the same count.

\begin{thm}\label{3.1}
For each $1 \leqslant i < j \leqslant 5$, there exists a bijection from the objects in $\mathscr{X}_{i,mn}^{\left(r\right)}$ to the objects in $\mathscr{X}_{j,mn}^{\left(r\right)}$ such that there is bijection between elements of corresponding objects and parent/child, class relations are preserved.
\end{thm}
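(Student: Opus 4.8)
The plan is to establish the five bijections not by constructing ten separate maps, but by fixing a single canonical intermediate object and routing everything through it. The natural hub is interpretation (4), the $(r+1)$-ary trees in $\mathscr{X}_{4,mn}^{(r)}$, since these are the most combinatorially rigid and make the parent/child and class structure easiest to read off directly from vertex positions. Concretely, I would prove a bijection $\Phi_i \colon \mathscr{X}_{i,mn}^{(r)} \to \mathscr{X}_{4,mn}^{(r)}$ for each $i \in \{1,2,3,5\}$, check in each case that $\Phi_i$ carries elements to elements and preserves both the parent/child relation and the class relation, and then obtain the bijection $\mathscr{X}_{i,mn}^{(r)} \to \mathscr{X}_{j,mn}^{(r)}$ for arbitrary $i<j$ as the composite $\Phi_j^{-1} \circ \Phi_i$. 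Because composition of structure-preserving bijections is again structure-preserving, this reduces the ten required statements to four, and the transitivity of the element-correspondence and of the relation-preservation is then immediate.

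The individual maps are the Fuss-Catalan analogues of the classical Catalan bijections already recalled in Section~\ref{catalan} and Section~\ref{fuss1}, now tracked at the level of the distinguished \emph{elements} (hyperedges, slabs, pairs, vertices, and $(r+2)$-gons). For $\Phi_1$ (plane hypertrees $\leftrightarrow$ $(r+1)$-ary trees) I would use the standard encoding in which each hyperedge, equivalently each white vertex of $\Tilde T$, becomes a vertex of the $(r+1)$-ary tree with its $r+1$ children corresponding to the linearly ordered subtrees dangling from its non-root vertices; the "distance to root" level on hyperedges matches the number of right steps along the $(r+1)$-ary tree, which is exactly how level was defined in (4). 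For $\Phi_2$ (Dyck paths) the key is that the slab decomposition of an $r$-Dyck path is precisely the image of the preorder traversal used in the bijections of Section~\ref{catalan}: each $(x,y)\to(x+1,y+r)$ up-step opens a slab and corresponds to a right step in the tree, so slabs match vertices, the containment condition on horizontal edges matches the parent relation, and the residue of $a$ modulo $r$ matches the positional class. $\Phi_3$ (ballot sequences) then follows by the standard slab-to-step dictionary $r \mapsto (x,y)\to(x+1,y+r)$ and $-1 \mapsto (x,y)\to(x+1,y-1)$, under which the pair condition $(i,j)$ in the ballot sequence reads off exactly the matched up-/down-steps bounding a slab, so I would in fact prefer to realize $\Phi_3$ as a composite through $\Phi_2$. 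Finally $\Phi_5$ (polygonal divisions) uses the classical dual-tree construction: each $(r+2)$-gon becomes a vertex, the fixed boundary edge $e$ selects the root polygon, and the left/right labelling of the $r+1$ interior edges entering a polygon is precisely the left/right position data defining the $(r+1)$-ary tree, so the "single right edge then arbitrary left edges" parent rule translates verbatim into the parent rule of (4).

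The main obstacle I expect is not the existence of the underlying set bijections, which are classical, but the verification that each $\Phi_i$ respects the three-part structure simultaneously, and in particular that it respects the \emph{class} relation. The level and parent/child data are local and transfer cleanly, but the class relation is defined differently in each interpretation (containment of the root, residues modulo $r$, and "same position from the left under the parent"), and the content of the theorem is precisely that these disparate definitions coincide under the bijections. I would therefore organize the proof around a single invariant: I would show that in every interpretation the class of an element is determined by, and determines, the position (an element of $\{1,\ldots,r+1\}$) of the edge it shares with its parent, reading the root's shared edge as the $(r+1)$th position as stipulated in (4). Once I prove that each $\Phi_i$ preserves this shared-edge position, preservation of the class relation follows uniformly, and the two remaining relations can be checked by the same positional bookkeeping. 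This reduces the whole theorem to a position-tracking lemma carried out once per map, which is the step on which I would spend the most care.
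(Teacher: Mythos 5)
Your proposal is correct and follows essentially the same route as the paper: both reduce the ten claimed bijections to four classical ones (preorder traversal between hypertrees and $r$-Dyck paths, the step-to-symbol dictionary for ballot sequences, the de Bruijn--Morselt encoding between hypertrees and $(r+1)$-ary trees, and the dual-tree construction for polygonal divisions) and obtain the rest by composition; the only difference is that you route everything through $\mathscr{X}_{4,mn}^{(r)}$ as an explicit hub, whereas the paper's chain implicitly uses plane hypertrees. Your added emphasis on verifying class preservation via a single shared-edge-position invariant is a welcome refinement of a step the paper's proof only asserts, but it does not change the approach.
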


\begin{proof}
The bijections of these sets are similar to the bijections for the normal Catalan numbers. We give brief explanations of the bijections. These bijections make it clear that different notions of admissible labellings agree.
\begin{enumerate}
    \item \textbf{Plane Hypertrees and $r$-Dyck Paths.} From a Dyck Path, we can create a \emph{preorder tree traversal} for a plane hypertree. We start at the root. For every $(1,r)$ step, we create a new hyperedge branching off from the vertex we are at. For every $(1,-1)$ step, we move up one vertex towards the root. We see that will never need to move up from the root vertex since the Dyck Path stays above the x-axis. There are $mn$ steps of the form $(1,r)$, so there will be $mn$ hyperedges in the hypertree. This process is reversible to form a Dyck path given a plane hypertree.
    \item \textbf{$r$-Dyck Paths and Ballot Sequences.} 
To create a ballot sequence from a r-Dyck path, we replace steps of the form $(1,r)$ by $r$ and $(1,-1)$ by $-1$ in the dyck word. This process is easily reversible.

\item \textbf{$(r+1)$-ary Trees and Polygonal Divisions.}
Let $\Delta$ be a $(r+2)$-gongulation of the polygon $\Pi_{rn+2}$ with a distinguished edge $e$. One can make a $(r+1)$-ary tree $R(\Delta)$  by taking the dual of $\Delta$ as follows. The vertices of $R(\Delta)$ are $(r+2)$-gons in $\Delta$. Two vertices are joined by an edge if and only if they correspond to adjacent $(r+2)$-gons in $\Delta$. The distinguished edge $e$ sits on the boundary of one $(r+2)$-gon  which determines the root of $R(\Delta)$. The $(r+1)$-ary tree can now be specified by the direction of edges. We start with the root first. Let the distinguished edge $e$ sit on the boundary of the $(r+2)$-gon $\Gamma$ and the $(r+2)$-gons directly adjacent to $\Gamma$ be $\Gamma_1,\Gamma_2,\ldots,\Gamma_i$ and the vertices inside $\Gamma_i$ be denote $\gamma_i$. The edges of the $(r+2)$-gon containing $e$ when traversed in anticlockwise order starting at the vertex lying on $e$ be labelled $\{1,2,\ldots,r+1\}$ as per their occurrence in increasing order where we don't give the edge $e$ any label. If the edge $\gamma - \gamma_i$ crosses the edge labelled $j$ we give it direction $j$. We repeat the procedure replacing $j$ by $e$. This gives a recursive specification of an $(r+1)$-ary  tree $R(\Delta)$ from $\Delta$ and is seen to be easily reversible. Figure \ref{fig:fuss4} illustrates this process.
\item \textbf{Plane Hypertrees and $(r+1)$-ary Trees.} Let $T$ be a $(r+1)$ uniform hypertree on $n$ hyperedges. We will construct an incomplete $(r+1)$-ary tree $R(T)$ on $n$ vertices. The root hyperedge corresponds to a vertex in $R(T)$. Label the vertices of root hyperedge from left to right $\{v_1,v_2,\ldots,v_{r+1}\}$. If there is a hyperedge hanging from $v_i$ of the root vertex then there is an edge from the root vertex of $R(T)$ to a vertex in direction $i$. We similarly label the hyperedge hanging from $v_i$ from left to right $\{v_1,v_2,\ldots,v_{r+1}\}$ and repeat the procedure. Figure \ref{fig:fuss3} illustrates the process . This process is equivalent to the bijection given by  Bruijn and Morselt \cite{de1967note} for regular graphs. 
\end{enumerate}

\end{proof}

Next, to prove some combinatorial equivalences, we first define a few sets. Let
\[\mathscr{A} = \mathscr{A}_{mn}^{\left(r\right)} = \{(H,l) \mid  H \in \mathscr{X}_{mn}^{\left(r\right)}\text{ and } l \text{ is an admissible } m \text{-labeling}\}.\]
Also, let $\mathscr{F} = \mathscr{F}^{(r,m)}$ be set of tours on the associated bipartite graph of a hyperedge with $r+1$ vertices
\[\{u \mid u \text{ an } a_T^{\left(r,m\right)}\text{-tour} \}\] 
modulo the equivalence relation $u = u'$ if there is an automorphism of the vertices of the hyperedge that takes $u$ to $u'$. Note that $|\mathscr{F}^{r,m}| = FC_1^{\left(r,m\right)}$.

Lastly, let $\mathscr{C}$ be the set of pairs
\[\{(T,w) \mid T \in \mathcal{H}_n^{\left(r\right)} \text{, } w \text{ an } a_T^{\left(r,m\right)}\text{-tour}\}.\]

\begin{thm}\label{3.2}
For all $r,m \geqslant 1$ and $n \geqslant 0$,
\[FC_n^{\left(r,m\right)} = (FC_1^{\left(r,m\right)})^n \left|\mathscr{A}_{mn}^{\left(r\right)}\right|.\]
\end{thm}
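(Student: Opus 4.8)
The plan is to first strip away the automorphism weight in Definition~\ref{def2.7} by reinterpreting the weighted sum as a genuine count of isomorphism classes, and then to establish a product decomposition of a global tour into a combinatorial ``skeleton'' carrying an admissible $m$-labeling together with one local tour per hyperedge.

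First I would observe that for a fixed $T \in \mathcal{H}_n^{(r+1)}$ the inner sum $\sum_{v} a_T^{(r,m)}(v)$, taken over black vertices, is exactly the number of $a_T^{(r,m)}$-tours on $\Tilde{T}$, since each such tour carries its own basepoint $v$; thus $FC_n^{(r,m)} = \sum_{T} |\{w : (T,w) \in \mathscr{C}\}|/|\Gamma(T)|$. Next I would show that $\Gamma(T)$ acts freely on the set of these tours: condition~(2) in the definition of $a_T^{(r,m)}$ forces every edge of $\Tilde{T}$ to be used, hence every vertex of the connected graph $\Tilde{T}$ to be visited, so any automorphism fixing a tour as a sequence of vertices must fix all vertices and is therefore the identity. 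Consequently each $\Gamma(T)$-orbit of tours has size exactly $|\Gamma(T)|$, and summing over isomorphism classes of unlabeled hypertrees shows that $FC_n^{(r,m)}$ equals the number of isomorphism classes of pairs $(T,w) \in \mathscr{C}$, i.e. $FC_n^{(r,m)} = |\mathscr{C}/\Gamma|$.

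Second I would build a bijection between $\mathscr{C}/\Gamma$ and $\mathscr{A}_{mn}^{(r)} \times \mathscr{F}^{n}$. Reading the tour $w$ in order records the sequence in which the $2m$ passes over each hyperedge occur; the tour traverses each of the $n$ hyperedges of $T$ exactly $2m$ times, and the $mn$ outward traversals expand $T$ into an $(r+1)$-uniform rooted plane hypertree $H$ on $rmn+1$ vertices, that is, an element of $\mathscr{X}_{1,mn}^{(r)}$, with level structure given by distance from the root and class structure recording which shared vertices correspond under the parent relation. The admissible $m$-labeling $l$ then marks which of the $mn$ hyperedges of $H$ descend from a common hyperedge of $T$, grouping them into classes of size $m$; the three admissibility conditions correspond precisely to distinct hyperedges of $T$ receiving distinct labels, to the parent-label compatibility forced by the walk, and to the grouping being consistent with the levels. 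Simultaneously, restricting $w$ to a single hyperedge of $T$ records how the tour cycles $m$ times through its $r+1$ black vertices; modding out by the $S_{r+1}$-action on those vertices yields one element of $\mathscr{F}$ per hyperedge, hence a tuple in $\mathscr{F}^{n}$. Passing to the other four interpretations is then free by Theorem~\ref{3.1}.

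Finally I would verify reversibility: given $(H,l) \in \mathscr{A}_{mn}^{(r)}$ and a tuple of local tours, one reconstructs $T$ by collapsing each label class of $H$ to a single hyperedge and recovers $w$ by splicing the local tours together in the order dictated by the level and parent data of $(H,l)$, producing $(T,w)$ uniquely up to isomorphism. Since $|\mathscr{F}| = FC_1^{(r,m)}$ and the combinatorial and local-tour factors are independent, the bijection gives $FC_n^{(r,m)} = |\mathscr{A}_{mn}^{(r)}|\cdot|\mathscr{F}|^{n} = (FC_1^{(r,m)})^{n}\,|\mathscr{A}_{mn}^{(r)}|$. I expect the main obstacle to be the middle step: checking that the admissibility conditions on the $m$-labeling are \emph{equivalent} to the existence of a well-defined global tour, in particular that condition~(3) (no $v_1v_2v_1$ backtracking) together with the parent-label agreement guarantees that the splicing of the $n$ local tours always yields a legitimate $a_T^{(r,m)}$-tour, and that no information is double-counted when passing to isomorphism classes of hypertrees.
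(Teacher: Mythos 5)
Your proposal is correct and follows essentially the same route as the paper: identify $FC_n^{(r,m)}$ with the count of automorphism classes of pairs $(T,w)$ in $\mathscr{C}$, then biject $\mathscr{C}$ with $\mathscr{A}_{mn}^{(r)} \times \mathscr{F}^n$ by unrolling the tour into an admissibly $m$-labeled plane hypertree together with one local tour per hyperedge, and invert by collapsing label classes and splicing. The one place you go beyond the paper is in explicitly justifying the passage from the weighted sum $\sum_v a_T^{(r,m)}(v)/|\Gamma(T)|$ to an orbit count via freeness of the $\Gamma(T)$-action on based tours, a step the paper leaves implicit.
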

\begin{proof}

Let $\mathscr{C}$ be the set of pairs
\[\{(\Tilde{T},w) \mid T \in \mathcal{H}_n^{\left(r\right)} \text{, } w \text{ an } a_T^{\left(r,m\right)}\text{-tour}\}\]

modulo the equivalence relation $(T,w) = (T',w')$ if $T = T'$ and there is an automorphism of $T$ taking $w$ to $w'$. We prove our claim by making a bijection from $\mathscr{C}$ to $\mathscr{A} \times \mathscr{F}^n$.

Next, we define two maps, $\alpha: \mathscr{C} \rightarrow \mathscr{A} \times \mathscr{F}^n$ and $\beta: \mathscr{A} \times \mathscr{F}^n \rightarrow \mathscr{C}$. We define $\alpha$ first. Given $(\Tilde{T},w) \in \mathscr{C}$, let $v$ be the vertex where $w$ begins and ends. We formulate the plane hypertree and labeling of $\alpha(T,w)$ recursively. We start with the root of the plane hypertree. The number of hyperedges connected to the root is the number of times $v \rightarrow e$ for some white vertex $e$ occurs in $w$. From left to right, the $i$th hyperedge is given the label of the white vertex in the $i$th occurrence of $v \rightarrow e$. Consider $v'$, the $i$th vertex, $2 \leqslant i \leqslant r+1$, of hyperedge $e'$ in the plane hypertree. Say this hyperedge is the $j$th hyperedge from left to right with its labelling. Let $e$ be the label of $e'$ and $v_i$ be the $i$th vertex of $e$. The number of hyperedges branching off $e$ from vertex $v'$ is the number of occurrences of $v_i \rightarrow f$ heading away from $v$ in $w$ after the $j$th occurrence of $e \rightarrow v_i$ and before the $(j+1)$th occurrence of $e \rightarrow v_i$ if $j < m$. From left to right, the $k$th hyperedge is given the label of $f$ in the $k$th occurrence of $v_i \rightarrow f$. This recursively defines a hypertree and and labelling $(H, l)$

The labelling is an admissible $m$-labelling because in $w$ each hyperedge in $T$ is entered from its parent hyperedge exactly $m$ times. Two hyperedges $e'_1, e'_2$ which have the same labelling are in the same class since they both branch from the $i$th vertex of their parents. Lastly, their parents are both labelled with $e$.

Now we formulate the tuple $U = (u_1,u_2,\dots,u_n) \in \mathscr{F}^n$. Label the white vertices in $\Tilde{T}$ from $e_1$ to $e_n$. Then, label the black vertices adjacent to each $e_i$ from $v_{i,1}$ to $v_{i,r+1}$ where $v_{i,1}$ is closest to $v$.  Let $e', v'_1, v'_2, \dots, v'_{r+1}$ be the vertices of the associated bipartite of a hyperedge with $r+1$ vertices. For each $i$, we create $u_i$ using all edge crossings of form $v_{i,j} \rightarrow e_i$ and $e_i \rightarrow_{i,j}$ in $w$ in order. An edge crossing of form $v_{i,j} \rightarrow e_i$ in $w$ corresponds to $v'_j \rightarrow e'$ in $u_i$ and an edge crossing of form $e_i \rightarrow_{i,j}$ in $w$ corresponds to $e \rightarrow v'_j$ in $u_i$.

We claim that $\alpha$ is well-defined. Suppose $\alpha(\Tilde{T},w) = ((H,l),U)$ and $(\Tilde{T},w) \sim (\Tilde{T}',w')$. Then, $\alpha(\Tilde{T}',w') = (H,l')$ where the labels of $l'$ are a permutation of those in $l$. Its easy to see that $\alpha$ is injective.

We now define $\beta$. Let $((H,l),U) \in \mathscr{A} \times \mathscr{F}^n$. Let $H'$ be the graph obtained from $H$ by identifying hyperedges with the same label. Note that $H'$ is a hypertree. The admissibility of the labelling implies on hyperedges is $m:1$, the map on vertices away from the root is $m:1$, and the map on the root is $1:1$. This gives a hypergraph on $rn+1$ vertices and $n$ edges, which makes it indeed a $r$-uniform hypertree. As in the previous function, label the vertices of $\Tilde{H}'$ with $e_i$ and $v_{i,j}$.

Next, we define a walk $w$ on $\Tilde{H}'$. We provide casework for where the walk will continue to.
\begin{itemize}
    \item When $w$ is at the root of $\Tilde{H}'$ for the $p$th time, if $e'$ is the $p$th hyperedge directly under the root in $H$, and $e_i$ is the label of $e'$, the walk travels on the edge $v_{i,1} \rightarrow e_i$.
    \item When $w$ enters a hyperedge $v_{i,1} \rightarrow e_i$ for the $p$th time, and the $p$th occurrence of $v'_1 \rightarrow e'$ in $u_i$ is followed by $e' \rightarrow v'_j$, the walk continues to $e_i \rightarrow v{i,j}$.
    \item When $w$ enters a hyperedge in the form if $v_{i,j} \rightarrow e_i$ for $q$th time between the $p$th and $(p+1)$th (end if $p = m$) occurrence of $v_{i,1} \rightarrow e_i$, and the $q+1$th occurrence of edges of the form $e' \rightarrow v'_j$ between the $p$th and $(p+1)$th occurrence of $v'_1 \rightarrow e'$ in $u_i$ exists and is $e' \rightarrow v'_{j'}$, then the walk continues to $e_i \rightarrow v_{i,j'}$.
    \item When $w$ enters a hyperedge in the form if $v_{i,j} \rightarrow e_i$ for $q$th time between the $p$th and $(p+1)$th (end if $p = m$) occurrence of $v_{i,1} \rightarrow e_i$, and there are exactly $q$ occurrences of edges of the form $e' \rightarrow v'_j$ between the $p$th and $(p+1)$th occurrence of $v'_1 \rightarrow e'$ in $u_i$, then the walk continues to $e_i \rightarrow v_{i,1}$.
    
    \item When $w$ goes $e_i \rightarrow v_{i,j}$ where $v_{i,j}$ is a vertex in no other hyperedge, the walk is continued by $v_{i,j} \rightarrow e_i$.
    
    \item For any hyperedge $e_i$ who has child $e_{i'}$ for which they share vertex $v_{i',1}$, $e_{i'} \rightarrow v_{i',1}$ is always followed by $v_{i',1} \rightarrow e_i$ in the walk.
\end{itemize}

The $m$-admissibility of the labelling makes this a proper tour. 

We claim that $\beta$ is well defined. Suppose $\beta((H,l),U) = (\Tilde{T},w)$ and $U' = (u'_1, u'_2, \dots, u'_n)$ such that $u'_i \sim u_i$ for all $i$. Then, $\beta((H,l),U') = (\Tilde{T},w)$ where the vertices within each hyperedge in $\Tilde{T}$ are permuted.

To complete the proof, it suffices to prove that $\beta \circ \alpha = 1_{\mathscr{C}}$ and $\alpha \circ \beta = 1_{\mathscr{A} \times \mathscr{F}^n}$.

We start with $\beta \circ \alpha = 1_{\mathscr{C}}$. We prove that $(\beta \circ \alpha)(\Tilde{T},w) = (\Tilde{T},w)$ by induction on $n$.
The claim is clearly true for $n=0$.
For the induction step, we assume the claim holds for $n = k-1$, $k > 0$. Let $T$ be a hypertree with $n$ hyperedges and $w$ be a walk on $\Tilde{T}$. Let $e_x$ be a hyperedge on $T$ that shares a vertex with exactly one other hyperedge. Let $T'$ be the tree formed from $T$ by deleting $e_x$ and $w'$ be a walk on $\Tilde{T}'$ that is modified appropriately from $w$. In $\alpha(\Tilde{T}',w') = ((H',l'),U')$, there are vertices $v_1, \dots, v_m$ in $H'$ that map to $v_{x,1}$ in $T'$. Then, $((H,l),U)$ is obtained from attaching $m$ hyperedges labelled with $e_x$ to these $v_i$ according to $w$. Also, $u_x$ is determined by $w$. By the inductive hypothesis, $\beta((H',l'),U') = (\Tilde{T}',w')$. Then, $\beta((H,l),U)$ is obtained by collapsing those $m$ hyperedges we attached and attaching it to $v_{x,1}$ in $T$. The positions of the hyperedges in $H$ and $u_x$ determine the correct way to add to $w'$ to get $w$. By induction, the claim is true.\newline
The proof for $\alpha \circ \beta = 1_{\mathscr{A} \times \mathscr{F}^n}$ is done in a similar manner. Thus, we have a proven a bijection between $\mathscr{C}$ and $\mathscr{A} \times \mathscr{F}^n$. Therefore, we have 
\[FC_n^{\left(r,m\right)} = \left(FC_1^{\left(r,m\right)}\right)^n |\mathscr{A}_{mn}^{\left(r\right)}|\]
as desired.
\end{proof}

\section{Generating Function}\label{Genfunc}

\subsection{Calculating $FC_{1}^{(r,m)}$}

Recall the definition of the Hypergraph Fuss-Catalan numbers given in Section~\ref{hypergraph fuss}. Specifically, we must consider the set $\mathcal{H}_1^{(r+1)}$, which consists of 1 hypertree $T$; the hypertree on $r+1$ vertices with 1 edge, so all vertices are contained in the same edge. The associated $\Tilde{T}$ is then one white vertex with $r+1$ black vertices each connected to it by an edge (the star graph $S_{r+1}$ with the center vertex colored white and all leaves colored black). 

Next, we count $a_T^{(r,m)}(v)$ for each black vertex on this graph. Consider labelling the vertices $1$ through $r+1$ in some order (the particular order does not matter), and say we want to find $a_T^{(r,m)}(i)$ for some $1 \leqslant i \leqslant r+1$. We have the following.

\begin{prop}
There is a bijection between the number of valid walks on $\Tilde{T}$ beginning at vertex $i$ ($a_T^{(r,m)}(i)$) and the number of sequences of $m$ $1$s, $m$ $2$s, and so on up to $m$ $(r+1)$s such that no two adjacent elements of the sequence are equal, the last element of the sequence is $i$, and the first element of the sequence is not $i$ which we denote by $CS^{(r+1,m)}$. 
\end{prop}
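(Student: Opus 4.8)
The plan is to exploit the very simple shape of $\widetilde{T}$: it is the star with a single white centre $e$ joined to the $r+1$ black leaves $1,\dots,r+1$. Because $\widetilde{T}$ is bipartite with only one white vertex, any walk must alternate black--white--black--\dots, and every white step is forced to pass through $e$. Hence a walk beginning and ending at $i$ is completely encoded by the sequence of black vertices it visits,
\[
i = b_0 \to e \to b_1 \to e \to \cdots \to e \to b_N,
\]
with $b_0 = b_N = i$; conversely every sequence $(b_0,\dots,b_N)$ of black vertices with $b_0=b_N=i$ determines a unique walk. So I would first set up this identification and then rewrite the three defining conditions of an $a_T^{(r,m)}$-tour as conditions on $(b_0,\dots,b_N)$.

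Next I would translate the conditions. The forbidden pattern $v_1 v_2 v_1$ (black--white--same black) is exactly a step $b_t \to e \to b_t$, so condition (3) says precisely that $b_t \neq b_{t+1}$ for all $t$. For condition (2), I would use the fact that on a tree every closed walk crosses each edge the same number of times in each direction; thus requiring each edge to be used $2m$ times, $m$ in each direction, is equivalent to requiring each edge to be traversed $2m$ times in total. Counting arrivals at and departures from each black vertex then shows this is equivalent to the statement that each $j \neq i$ occurs exactly $m$ times among $b_0,\dots,b_N$, while $i$ occurs $m+1$ times (the two boundary occurrences $b_0$ and $b_N$ together with $m-1$ interior ones). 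In particular $N = m(r+1)$.

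Finally I would define the bijection by deleting the initial entry, sending the walk to $(b_1,\dots,b_N)$. By the multiplicity count this word contains each of $1,\dots,r+1$ exactly $m$ times, it has no two equal adjacent letters, its last letter is $b_N = i$, and its first letter satisfies $b_1 \neq b_0 = i$, so it lies in $CS^{(r+1,m)}$. The inverse prepends $i$ to a word of $CS^{(r+1,m)}$ and reads off the corresponding star-walk; here one checks that prepending $i$ restores the multiplicity of $i$ to $m+1$ and that $b_0 = i \neq b_1$ preserves the no-loop condition, so the resulting walk is a valid $a_T^{(r,m)}$-tour. The two maps are mutually inverse, giving $a_T^{(r,m)}(i) = \#\,CS^{(r+1,m)}$. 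I expect the only delicate point to be the bookkeeping at the endpoints -- tracking why the start/end vertex $i$ must appear with multiplicity $m+1$ in the full sequence but $m$ after deletion, and verifying that deleting $b_0$ rather than $b_N$ is what produces the required normalization ``last letter $i$, first letter $\neq i$''; the remaining verifications are routine.
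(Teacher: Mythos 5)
Your proposal is correct and follows essentially the same route as the paper: both encode a walk on the star $\widetilde{T}$ by the sequence of black vertices it visits, observe that the no-loop condition is exactly the Smirnov condition on adjacent letters, deduce from the edge-usage requirement that $i$ appears $m+1$ times and every other vertex $m$ times, and then delete the initial $i$ to obtain the normalized word. Your version is somewhat more careful about the endpoint bookkeeping (the arrival/departure count giving multiplicity $m+1$ for $i$), but the underlying bijection is identical.
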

\begin{proof}
Consider the middle of a walk at a black vertex $j \in \Tilde{T}$. To continue the walk, one must travel to the central white vertex, then to any vertex $k \neq j \in \Tilde{T}$. The walk must begin at vertex $i$ and end at vertex $i$ to be valid, and, not including the beginning vertex, must hit every black vertex of $\Tilde{T}$ exactly $m$ times. This is because an edge is only traversed when travelling to or from the black vertex it contains. Each edge is traversed twice each time its corresponding vertex is crossed, so each black vertex of $\Tilde{T}$ must be passed exactly $m$ times for the walk to be valid, not including the beginning. 

Thus, a valid walk corresponds to a sequence of numbers that consists of $i$ followed by $m$ each of 1, 2, and up to $(r+1)$, such that no two adjacent elements are equal and the last element is $i$. Since the walk starts at $i$, the second vertex hit cannot be $i$. Disregarding the beginning $i$ yields that each valid walk on $\Tilde{T}$ maps to a sequence of the desired form. The process is reversible, which shows the bijection. 
\end{proof}

Note that words with adjacent letters unequal are well-studied, with a closed form given in \cite{eifler1971sequences}. These are called as \emph{Smirnov words} which are formally defined as follows.
 
\begin{definition}
Given a finite set $W$ of letters, $w=w_1w_2\ldots w_n$ such that $w_i \in W$ is called a \emph{Smirnov word} if $w_{i} \neq w_{i+1}$ for $1 \leqslant i \leqslant n-1$
\end{definition}

Since we are interested in $\sum_{i=1}^{r+1} a_T^{(r,m)}(i)$, we are interested in the number of sequences of $m$ 1s, $m$ 2s, and so on up to $m$ $(r+1)$s such that no two adjacent elements of the sequence are equal and the last element of the sequence is not equal to the first. As stated in \cite{MR4101322}, these are known as $\emph{circular Smirnov words}$. In general, these are cumbersome to enumerate, although there is a known generating function as given in \cite{stanley1995symmetric} .  We recall the generating function of Smirnov words over alphabet $\{x_1,x_2,\ldots,x_n\}$ is \[S = \frac{1}{1-\sum_{i=1}^{n} \frac{x_i}{1+x_i}}\] which can be found in \cite{goulden2004combinatorial}. Circular Smirnov words can be specified in terms of Smirnov words as proved below.

\begin{thm}
Let $\mathcal{CS}$ be the set of circular Smirinov words over the alphabet $\{x_1,\ldots,x_n\}$. The ordinary generating function $C(x_1,\ldots,x_n)$ of $\mathcal{CS}$ is given by \[ C(x_1,\ldots,x_n) = \sum_{i=1}^{n}x_i + \sum_{i=1}^{n} \frac{x_i(S(t_i)-1)}{1-x_i(S(t_i)-1)},\]
where $t_i = (x_1,\ldots,x_{i-1},x_{i+1},\ldots,x_n)$
and $S(x_1,\ldots,x_n)$ is the generating function for regular Smirnov words.
\end{thm}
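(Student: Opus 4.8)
The plan is to combine a block decomposition with the symbolic generating-function method, partitioning the circular Smirnov words according to their first letter. I would treat a circular Smirnov word as a linear word $w = w_1 \cdots w_\ell$ subject to $w_i \neq w_{i+1}$ and (for $\ell \geq 2$) $w_1 \neq w_\ell$; this matches the definition used earlier and makes the first letter well defined, so no ambiguity from cyclic rotations arises. First I would dispose of the words of length one: under the convention that a single letter is a circular Smirnov word, these contribute exactly $\sum_{i=1}^n x_i$, which is the first sum in the formula, and I set them aside.

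Next, fix an index $i$ and consider a circular Smirnov word $w$ of length at least two with $w_1 = x_i$. Writing the occurrences of $x_i$ in $w$ at positions $1 = p_1 < p_2 < \cdots < p_k$, I would decompose $w = x_i B_1 x_i B_2 \cdots x_i B_k$, where $B_j$ is the factor of $w$ lying strictly between the $j$th occurrence of $x_i$ and the next one, with $B_k$ the suffix following the last $x_i$. The key claim is that $w \mapsto (B_1,\dots,B_k)$ is a bijection from the circular Smirnov words of length at least two beginning with $x_i$ onto the sequences $(B_1,\dots,B_k)$, $k \geq 1$, of nonempty Smirnov words over the reduced alphabet $t_i$ (that is, avoiding the letter $x_i$).

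To verify this bijection I would argue as follows. Each $B_j$ avoids $x_i$ by construction and is a Smirnov word because it is a factor of the Smirnov word $w$; for $1 \leq j < k$ it is nonempty since $w$ is Smirnov, so two consecutive copies of $x_i$ cannot be adjacent. The one place the circular hypothesis is used is to force $B_k$ to be nonempty: the condition $w_\ell \neq w_1 = x_i$ guarantees at least one letter after the final $x_i$. Conversely, given nonempty $x_i$-avoiding Smirnov words $B_1,\dots,B_k$, the concatenation $x_i B_1 \cdots x_i B_k$ is again Smirnov, since the only boundary adjacencies are between an $x_i$ and a neighboring letter of some $B_j$, which is distinct from $x_i$, and it is circular because it starts with $x_i$ and ends with a letter of $B_k$ that is not $x_i$; this map inverts the decomposition.

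Finally I would translate to generating functions. The generating function for nonempty Smirnov words over $t_i$ is $S(t_i)-1$, so a single block $x_i B_j$ is counted by $x_i(S(t_i)-1)$ and a sequence of $k \geq 1$ such blocks by $\bigl(x_i(S(t_i)-1)\bigr)^k$; summing the geometric series over $k \geq 1$ gives $\dfrac{x_i(S(t_i)-1)}{1-x_i(S(t_i)-1)}$. Adding these over $i$ together with the length-one contribution $\sum_{i=1}^n x_i$ yields the stated expression for $C(x_1,\dots,x_n)$. The main obstacle is conceptual rather than computational: identifying the correct block decomposition and recognizing that the circular constraint is precisely what upgrades the final factor from $1/(1-A_i)$ to $A_i/(1-A_i)$, forcing the last block to be nonempty. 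Once this is seen, the generating-function bookkeeping is routine.
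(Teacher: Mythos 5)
Your proof is correct and follows essentially the same route as the paper: both decompose a circular Smirnov word beginning with $x_i$ as a nonempty sequence of blocks $x_i B_j$ with each $B_j$ a nonempty Smirnov word over the reduced alphabet $t_i$, sum the resulting geometric series to get $\frac{x_i(S(t_i)-1)}{1-x_i(S(t_i)-1)}$, and add the length-one contribution $\sum_i x_i$. Your write-up is in fact more careful than the paper's, since you verify explicitly that the circular condition is what forces the final block to be nonempty.
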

\begin{proof}First we find generating function for Smirnov words that start at $x_1$ and do not end at $x_1$. Such a word is encoded by $x_1 + \text{SEQ}(x_1*(S(x_2,\ldots,x_n)-1))$ which translates to
\[\sum_{n=1}^{\infty}(x_1*(S(x_2,\ldots,x_n)-1))^n + x_1 = x_1 + \frac{x_1*(S(x_2,\ldots,x_n)-1)}{1-x_1*(S(x_2,\ldots,x_n)-1)}\]
where $*$ is the operation for concatenation. Summing over all $x_i$ we have the generating function of circular Smirnov words which is
\[\sum_{i=1}^{n}x_i + \sum_{i=1}^{n} \frac{x_i(S(t_i)-1)}{1-x_i(S(t_i)-1)},\]
where $t_i = (x_1,\ldots,x_{i-1},x_{i+1},\ldots,x_n)$.
\end{proof}
In particular note that 
\[CS^{(r+1,m)} = [(x_1x_2\ldots x_{r+1})^m] C(x_1,x_2,\ldots,x_{r+1})\]
The automorphism group on $S_{r+1}$ is known to be the symmetric group on $r+1$ elements, which has order $(r+1)!$. Thus, we have $$FC_1^{(r,m)} = \frac{\sum_{i=1}^{r+1} a_T^{(r,m)}(i)}{|\Gamma(S_{r+1})|} = \frac{CS^{(r+1,m)}}{(r+1)!}.$$ 
Some examples of $FC_{1}^{(r,m)}$ are 
\begin{align*}
    & r=2 : \,1, 1, 4, 22, 134, 866, 5812, 40048, 281374, 2006698, 14482064, 105527060, \ldots \\
    & r=3 : \, 1, 1, 31, 1415, 75843, 4446741, 276154969, 17851418019, 1188572791275,\ldots \\
    & r=4 : \, 1, 1, 293, 140343, 83002866, 55279816356, 39738077935264,, 30129436868588072,\ldots \\
    & r=5 : \,1, 1, 3326, 20167651, 158861646466, 1450728060971387, 14571371516350429184, \ldots \\
    & r=6 : \, 1, 1, 44189, 3980871156, 490294453324924, 72078730629785796608,\\
    & \,\,\,\,\,\,\,\,\,\,\,\,\,\,\,\,\,\,\,\, 11876790400066162977144832, \ldots \\
    & r=7 : \, 1, 1, 673471, 1035707510307, 2292204611710893056, 6235048155225092628938752, \ldots \\
    & r=8 :\, 1, 1, 11588884, 343866839138005, 15459367618357013512192, \\
    & \,\,\,\,\,\,\,\,\,\,\,\,\,\,\,\,\,\,\,\, 879601407931825671736009949184, \ldots
\end{align*}
This form can be slightly improved using the generating function from the above paragraph.




\subsection{The Generating Function}

We first prove two important propositions.


\begin{prop}\label{4.2}
Let $A(x) = \sum_{k=0}^{\infty} a_kx^k$ be an integral formal power series with $a_k > 0$. Let $\mathcal{P}_A$ be the set of all $(r+1)$ uniform hypertrees such that there is no hyperedge dangling from the root vertex except the hyperedge which contains the root and if a hyperedge has $k_i$ children branching from its $i$th vertex, $2 \leqslant i \leqslant r+1$, the hyperedge can be painted with one of $\prod a_{k_i}$ colors. Let $P_{A}$ be the ordinary generating function of $\mathcal{P}_A$. Then, $P_A$ satisfies the functional relation
$$P_A =  x(A(P_{A}))^r.$$
\end{prop}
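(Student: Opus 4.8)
The plan is to establish the functional equation by the symbolic method, decomposing every object of $\mathcal{P}_A$ according to its root hyperedge. Here I let $x$ mark the number of hyperedges, so that $P_A = \sum_H w(H)\, x^{e(H)}$, where the sum ranges over the underlying hypertrees $H \in \mathcal{P}_A$, $e(H)$ is the number of hyperedges of $H$, and $w(H) = \prod_f \prod_{i=2}^{r+1} a_{k_i(f)}$ is the total number of admissible colorings, the inner product running over the hyperedges $f$ of $H$ with $k_i(f)$ the number of children branching from the $i$th vertex of $f$. Since the smallest object is a single hyperedge contributing $a_0^r x$, the series $P_A$ has zero constant term, so the composition $A(P_A)$ is a well-defined formal power series.

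First I would peel off the root hyperedge, which contributes a single factor of $x$. Its first vertex is the root vertex, which by hypothesis carries no further hyperedges and so plays no role in the recursion. Each of the remaining $r$ vertices $v_2, \dots, v_{r+1}$ hosts a linearly ordered sequence of child subtrees, the subtrees dangling from a hyperedge being ordered as in a plane hypertree. If $v_i$ supports $k_i$ of them, then each child is itself the root hyperedge of an element of $\mathcal{P}_A$: the shared vertex $v_i$ serves as that subtree's root vertex, from which no other hyperedge of the subtree dangles, so the defining condition of $\mathcal{P}_A$ is inherited. An ordered sequence of $k_i$ such subtrees therefore contributes $P_A^{k_i}$ to the generating function.

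The key computation is to fold in the coloring. The root hyperedge may be painted in $\prod_{i=2}^{r+1} a_{k_i}$ ways, and this weight factors across the $r$ active vertices. Summing over the number of children at a single vertex $v_i$ gives
\[
\sum_{k_i \geqslant 0} a_{k_i}\, P_A^{k_i} = A(P_A),
\]
and since the choices at the $r$ vertices $v_2, \dots, v_{r+1}$ are mutually independent, their contributions multiply to $(A(P_A))^r$. Combining this with the factor $x$ from the root hyperedge yields $P_A = x\,(A(P_A))^r$.

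The main point to verify carefully --- and essentially the only genuine obstacle --- is the bookkeeping of the color weights under the decomposition: one must check that $w(H)$ splits as the product of the root hyperedge's local factor $\prod_{i=2}^{r+1} a_{k_i}$ with the weights of the child subtrees, with no hyperedge's color counted twice and none omitted. This holds because $w$ is defined as a product over hyperedges and each hyperedge of $H$ either is the root hyperedge or lies in exactly one child subtree. Granting this, the interchange of the product over the $r$ vertices with the sum over children at each vertex (which produces $A(P_A)$ in each factor) is the standard symbolic manipulation for ordered sequences, valid at the level of formal power series thanks to the zero constant term of $P_A$.
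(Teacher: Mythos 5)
Your proposal is correct and follows essentially the same route as the paper: delete the root hyperedge (contributing the factor $x$ and the color weight $\prod_{i=2}^{r+1} a_{k_i}$), observe that each of the $r$ non-root vertices carries an ordered sequence of subtrees each again lying in $\mathcal{P}_A$, and sum over the $k_i$ independently to obtain $x(A(P_A))^r$. Your write-up is more careful than the paper's about the multiplicativity of the weight $w(H)$ and the well-definedness of the composition $A(P_A)$, but the underlying decomposition is identical.
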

\begin{proof}
The singular hyperedge which contains the root can be colored in $\prod a_{k_i}$ ways and to the $i^{th}$ vertex of singular hyperedge we attach $a_{k_i}$ hypertrees from $\mathcal{P}_A$. Thus, $$P_A = x\left(\sum_{(k_2,k_3,\dots,k_{r+1})} \prod a_{k_i} P_A^{k_i}\right) = x (A(P_A))^r$$
which is the desired result.
\end{proof}

\begin{prop}\label{4.3}
Let $B(x) = \sum_{k=0}^{\infty} b_kx^k$ be an integral formal power series with $a_k > 0$. Let $\mathcal{P}_{A,B}$ be the set of all $(r+1)$ uniform hypertrees such that if a hyperedge has $k$ children, it can be painted with one of $a_k$ colors and if the root has $k$ children, it can be colored one of $b_k$ colors. Let $P_{A,B}$ be the ordinary generating function of $\mathcal{P}_{A,B}$. Then, $P_{A,B}$ satisfies the functional relation
$$P_{A,B} =  xB(P_{A}).$$
\end{prop}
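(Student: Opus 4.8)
The plan is to mirror the proof of Proposition~\ref{4.2}, but to account for the fact that the root hyperedge is now painted according to $B$ rather than $A$. First I would observe that every hypertree $T \in \mathcal{P}_{A,B}$ is determined by its root hyperedge together with the forest of subtrees hanging off the root's vertices, and that below the root the coloring rule is exactly the one governing $\mathcal{P}_A$. So the key structural point is a decomposition: cutting $T$ at the root hyperedge leaves $r$ vertices (the non-root vertices of the root hyperedge, since the root vertex itself carries no dangling hyperedge), and from each of these hangs an element of $\mathcal{P}_A$.

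Next I would translate this decomposition into generating functions. The single root hyperedge contributes a factor of $x$. If the root has $k$ total children, it may be colored in $b_k$ ways, and the combinatorial arrangement of those $k$ children into the $r$ available vertices, each subtree drawn from $\mathcal{P}_A$, is precisely what $P_A^k$ records. Summing $b_k$ against the number of ways to distribute $k$ children among the $r$ vertices as $\mathcal{P}_A$-subtrees should collapse, by the definition of $P_A$ as the generating function for a single such pendant subtree, into the composition $B(P_A)$. Concretely, the generating function counting the children-forest weighted by the root color is $\sum_k b_k P_A^{\,k} = B(P_A)$, so that $P_{A,B} = x\,B(P_A)$, as claimed.

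The step I expect to require the most care is verifying that the root-color bookkeeping in $B$ genuinely produces the clean composition $B(P_A)$ rather than an $r$-fold product as in Proposition~\ref{4.2}. In Proposition~\ref{4.2} the exponent $r$ arises because the colors $a_{k_i}$ are assigned per-vertex and multiply across the $r$ non-root vertices, giving $(A(P_A))^r$; here, by contrast, the hypothesis colors the root by its \emph{total} number of children $k$ in one shot, so the per-vertex factorization does not occur and a single substitution $B(P_A)$ results. I would make this precise by noting that $\mathcal{P}_A$ already encodes the per-vertex coloring beneath the root (so each pendant subtree is a genuine $\mathcal{P}_A$-object), and that the only remaining freedom at the root is the choice of one of $b_k$ colors, which depends solely on $k$ and hence factors out of the subtree enumeration as $\sum_k b_k P_A^k$. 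Once this is said cleanly, the functional relation $P_{A,B} = x B(P_A)$ follows directly, and no further computation is needed.
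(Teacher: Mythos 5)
Your proposal is correct and follows essentially the same route as the paper's proof: decompose at the root hyperedge (contributing the factor $x$), weight the root by $b_k$ according to its total number $k$ of children, and observe that the pendant subtrees are $\mathcal{P}_A$-objects contributing $P_A^k$, so that summing over $k$ yields $x\sum_k b_k P_A^k = xB(P_A)$. Your additional remarks explaining why one gets a single composition $B(P_A)$ rather than the $r$-fold product of Proposition~\ref{4.2} are a helpful elaboration but do not change the argument.
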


\begin{proof}
Let the root have $k$ children. Then, the root can be colored one of $b_k$ colors, and there are $P_A^k$ ways to color the children of the root, continuing recursively. This means $$P_{A,B} = x\left(\sum_{i=0}^{\infty} b_i\left(P_A\right)^i\right) = xB(P_A),$$  as desired.
\end{proof}

\begin{definition}
If a hypertree occurs in $P_{A,B}$ we say that $S$ is equipped with $(A,B)$ coloring.
\end{definition}
Let $\lambda(r,g)$ be the dimension of space of degree $g$ homogeneous polynomials in $r$ variables. We have 
$$\lambda\left(r,g\right) = \binom{r-1+g}{r-1}.$$
\noindent

Define $W_m(k)$ to be the number of ways to partition a set of cardinality $k$ into subsets of cardinality $m$. We have $$W_{m}(k) = \dfrac{k!}{(m!)^{k/m}(k/m)!}.$$
Note that $W_{m}(k)$ is defined iff $m \mid k$.
\begin{thm}
    Define
     $$\ell_{m}(x) = \sum_{d \geqslant 0} W_m(dm)\lambda(m,dm)x^d$$
    and
    $$h_m(x) = \sum_{d \geqslant 0} W_m(dm)x^d.$$
    Let $f_{r,m} \in \ZZ[x]$ satisfy the functional equation
    $$f_{r,m}(x) = x (\ell_m(f_{r,m}(x)))^r.$$
    Then the generating function 
    $$F_{(r,m)}(x) =  \sum_{n \geqslant 0} FC_{n}^{(r,m)}x^{n+1}$$
    satisfies $$F_{(r,m)}(x) = x h_{m}(f_{(r,m)}(FC_{1}^{(r,m)}x)).$$
\end{thm}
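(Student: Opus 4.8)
The plan is to pass from the weighted walk count to a purely enumerative quantity via Theorem~\ref{3.2}, and then to identify that quantity with $h_m(f_{r,m})$ by fibering over collapsed hypertrees. Writing $c = FC_1^{(r,m)}$ and $G(t) = \sum_{n \geq 0} \lvert\mathscr{A}_{mn}^{(r)}\rvert\, t^n$, Theorem~\ref{3.2} gives
\[
F_{(r,m)}(x) = \sum_{n \geq 0} \left(FC_1^{(r,m)}\right)^n \lvert\mathscr{A}_{mn}^{(r)}\rvert\, x^{n+1} = x\,G(cx).
\]
Since $f_{r,m}$ and the series $P_{\ell_m}$ of Proposition~\ref{4.2} (taking $A = \ell_m$) both satisfy $y = t\,(\ell_m(y))^r$ with $y(0)=0$, and $\ell_m(0)=1$ forces a unique such power series, they coincide. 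Hence it suffices to prove the single identity $G(t) = h_m(f_{r,m}(t)) = \sum_{d \geq 0} W_m(dm)\, f_{r,m}(t)^d$.

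To prove this I would use the plane-hypertree interpretation $\mathscr{X}_{1,mn}^{(r)}$ of $\mathscr{A}$ together with the collapse map $(H,l)\mapsto T$ from the proof of Theorem~\ref{3.2}, which identifies equally labelled hyperedges and produces a rooted plane hypertree $T$ with $n$ hyperedges. First I would reorganize $\lvert\mathscr{A}_{mn}^{(r)}\rvert$ as a sum over the possible $T$ of the number $M(T)$ of admissible labelled plane hypertrees collapsing to $T$, and then show that $M(T)$ factors into purely local contributions, one per vertex of $T$:
\[
M(T) = W_m(k_{v_0} m) \prod_{v \neq v_0} W_m(k_v m)\,\lambda(m, k_v m),
\]
where $v_0$ is the root vertex, $k_v$ is the number of child hyperedges branching from vertex $v$, and the remaining product runs over all other vertices (leaves, with $k_v=0$, contribute $1$). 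The two regimes reflect the behaviour of the collapse recorded in Theorem~\ref{3.2}: the root vertex maps $1{:}1$, whereas every other vertex maps $m{:}1$.

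The heart of the argument is the local count. At a non-root vertex $v$ with $k$ children, un-collapsing replaces $v$ by the $m$ copies of its hyperedge and replaces the $k$ child classes by $km$ child hyperedges, grouped into $k$ types of size $m$ that are indistinguishable within a type. Distributing these into the $m$ ordered child-lists of the copies and then quotienting by the $S_k$ action permuting the class labels gives, via the bijection between $m$-tuples of ordered lists and pairs (linear arrangement of the size-$km$ multiset, weak composition of $km$ into $m$ parts),
\[
\frac{1}{k!}\cdot \frac{(km)!}{(m!)^k}\cdot \binom{km+m-1}{m-1} = W_m(km)\,\lambda(m,km).
\]
At the root vertex there is a single copy instead of $m$, so the composition factor $\lambda$ disappears and the count is $W_m(km)$. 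Because $W_m(k_v m)\,\lambda(m,k_v m) = [\,y^{k_v}\,]\,\ell_m(y)$, Proposition~\ref{4.2} identifies the generating function of collapsed subtrees weighted by these per-vertex factors with $f_{r,m}$; summing the root contribution $\sum_{d} W_m(dm)\,f_{r,m}^d = h_m(f_{r,m})$ then yields $G = h_m(f_{r,m})$ and the theorem.

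The step I expect to be the main obstacle is establishing that $M(T)$ genuinely factors as the stated product, i.e.\ that the un-collapsing choices at distinct vertices are independent once $T$ is fixed. This requires careful bookkeeping of how the three admissibility conditions and the quotient by label-permutations interact with the plane (left-to-right) order; in particular it demands a clean proof that at each vertex the count depends only on the number of children $k_v$ and not on the shapes of the subtrees hanging below it, which is precisely what makes the per-vertex factors constant and lets them assemble into the functional equation of Proposition~\ref{4.2}.
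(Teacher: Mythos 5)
Your proposal is correct and follows essentially the same route as the paper: both arguments fiber $\mathscr{A}_{mn}^{(r)}$ over the collapsed plane hypertree $T$, compute the same local un-collapsing factor $W_m(k m)\,\lambda(m,km)$ at non-root vertices and $W_m(km)$ at the root (the paper packages this as a bijection onto colored hypertrees), and then assemble the result via Propositions~\ref{4.2} and~\ref{4.3} together with Theorem~\ref{3.2}. The factorization of $M(T)$ into independent per-vertex contributions, which you flag as the main remaining obstacle, is exactly the step the paper also asserts with only the remark that hyperedges of different classes are independent.
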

\begin{proof} Let $P_{l,h}$ be the set of $l,h$ colored plane hypertrees on $n$ hyperedges. We construct a bijection
$$\rho : \mathscr{A} \to P_{l,h}.$$
For each $(H,l) \in \mathscr{A}$, we form colored hypertree in $P_{l,h}$. The hypertree that will be colored is the hypertree $T$ on $n$ edges that corresponds to $(H,l)$ using the bijection in the combinatorial interpretation section.

The color of the root of $T$ represents the partition of its children hyperedges into labellings as in an admissible $m$-labeling. If the root has $dm$ edges, there are $W_m(dm)$ possible partitions of these hyperedges, giving us $W_m(dm)$ possible colors.

The color of a hyperedge $e$ of $T$ represents both ways to place children under the $m$ hyperedges that correspond to $e$ in $H$ as well as their labeling. We see that hyperedges of different classes are independent from each other, so we can look at each of the second to $(r+1)$ the classes separately. Within a the $i$th class, if there are $d_im$ children, there are $W_m(d_im)$ ways to label them and $\lambda(m,d_im)$ ways to distribute these children among the $m$ parent hyperedges. Thus, there are $\prod W_m(d_im) \lambda(m,d_im)$ ways to set and label these children hyperedges.

This gives a function from $\mathscr{A}$  to $P_{l,h}$. This function is invertible as a $(l,h)$ coloring on a hypertree $T$ on $n$ edges gives a unique $(H,l) \in \mathscr{A}$ using the process above backwards. Thus, we have formed a bijection. Then, by Proposition \ref{4.2} and Proposition \ref{4.3}, we have

\[f_{r,m}(x) = x (\ell_m(f_{r,m}(x)))^r, \quad\sum_{n \geqslant 0} |\mathscr{A}_{mn}^{r}|x^{n+1} = x h_{m}(f_{(r,m)}(x)).\]

Finally, since $FC_{n}^{(r,m)} = (FC_{1}^{(r,m)})^n|A|$, we have 

\[\sum_{n \geqslant 0} FC_{n}^{(r,m)}x^{n+1} = x h_{m}(f_{(r,m)}(FC_{1}^{(r,m)}x))\]
which is the desired result. 
\end{proof}

\section{Future Work}

In the future, we would like to find asymptotics for the generating function of these numbers. We would also like to find more combinatorial interpretations of these numbers, as well as explore more generalizations such as $q$-analogues. 

\section{Acknowledgements}

We would like to thank Lee Trent for her support in mentoring us throughout the project. We would also like to thank Professor Paul Gunnells for proposing the problem and providing us with helpful advice throughout the process. Finally, we would like to thank the Clay Mathematics Institute and PROMYS program for allowing us to undertake this project.

\section{Appendix}
We include data on $FC_{1}^{(r,m)}$ and $\mathscr{A}_{mn}^{(r)}$.
\subsection{$FC_{1}^{\left(2,m\right)}$ ; $0 \leqslant m \leqslant 22$}
\begin{align*}
& 1, 1, 4, 22, 134, 866, 5812, 40048, 281374, 2006698, 14482064, 105527060, 775113440,\\
& 5731756720, 42628923040, 318621793472, 2391808860446,18023208400634,\\ 
& 1033449449559724, 7858699302115444, 59906766929537120,457685157123172672.
\end{align*}
\subsection{$FC_{1}^{\left(3,m\right)}$ ; $0 \leqslant m \leqslant 15$}
\begin{align*}
&1, 1, 31, 1415, 75843, 4446741, 276154969, 17851418019, 1188572791275, 80953196003777,\\
&5613704715433131, 395005886411621632, 28132373164175540224, 2024078159788958023680,\\
&146898874444939943477248.
\end{align*}
\subsection{$FC_{1}^{\left(4,m\right)}$ ; $0 \leqslant m \leqslant 10$}

\begin{align*}
&1, 1, 293, 140343, 83002866, 55279816356, 39738077935264, 30129436868588072,\\
&23760203412845559808,19312059423860889485312, 16076955055099988982890496.
\end{align*}

\subsection{$FC_{1}^{\left(5,m\right)}$ ; $0 \leqslant m \leqslant 10$}
\begin{align*}
&1, 1, 3326, 20167651, 158861646466, 1450728060971387, 14571371516350429184, \\
&156418475586202988707840,1763546149118396438551724032, \\
&249071711627865231410775840362856448.
\end{align*}

\subsection{$FC_{1}^{\left(7,m\right)}$ ; $0 \leqslant m \leqslant 8$}
\begin{align*}
&1, 1, 673471, 1035707510307, 2292204611710893056, 6235048155225092628938752,\\
&19372051918038658908241101062144, 66048441479612871465789854936547196928.
\end{align*}
\subsection{$FC_{1}^{\left(8,m\right)}$ ; $0 \leqslant m \leqslant 7$}
\begin{align*}
    &1, 1, 11588884, 343866839138005, 15459367618357013512192,\\
    &879601407931825671736009949184, 58256941603805586085506513167594815488.
\end{align*}
\subsection{$\mathscr{A}_{2,n}^{(2)}$ ; $0 \leqslant n \leqslant 15$}
\begin{align*}
&1, 1, 9, 126, 2151, 41175, 850176, 18542034, 421860879, 9934669359,\\
&240959223765, 6000574953384, 153165781146996, 4005089138936340, 107341301939872140.
\end{align*}
\subsection{$\mathscr{A}_{3,n}^{(2)}$ ; $0 \leqslant n \leqslant 15$}
\begin{align*}
     &1, 1, 30, 1740, 141400, 14680200, 1906159200, 313012812000, 66433831920000,\\ &18438349698120000, 6629124634968000000, 3011746300705961280000,\\ &1681721795997004320000000, 1127233507688231983249600000
\end{align*}
\subsection{$\mathscr{A}_{2,n}^{(3)}$ ; $0 \leqslant n \leqslant 15$}
\begin{align*}
    &1, 1, 12, 222, 4956, 122985, 3267324, 91059444, 2629956924, 78098264100, \\
    &2371811147640,73388998683990, 2307381601052628, 73568992192119918
\end{align*}
\subsection{$\mathscr{A}_{3,n}^{(3)}$ ; $0 \leqslant n \leqslant 15$}
\begin{align*}
    &1,1, 40, 2920, 284000, 33507600, 4662841600, 769261248000, 154597443264000,\\ &39233217384400000, 12873892988852800000, 5441807779916104960000,\\ &2889966803748531046400000, 1871989337366944472934400000
\end{align*}

\nocite{*}
\bibliographystyle{plain}
\bibliography{ref.bib}

\end{document}